\def\nt{\noindent}
\newtheorem{lemma}{Lemma}[section]
\newtheorem{theorem}[lemma]{Theorem}
\newtheorem{proposition}[lemma]{Proposition}
\newtheorem{corollary}[lemma]{Corollary}
\theoremstyle{definition}
\newtheorem{definition}[lemma]{Definition}
\newtheorem{remark}[lemma]{Remark}
\newtheorem{example}[lemma]{Example}
\newtheorem{examples}[lemma]{Examples}
\newtheorem{question}[lemma]{Question}
\newcommand\ec { \color{black}}%
\def\SQ{\mathbb Q} 
\def\SZ{\mathbb Z} 
\def\SF{\mathbb F} 
\def\FF{\mathfrak{F}} 
\def\F{\textbf{F}} 
\def\f{\mathbf{f} }
\def\int{\operatorname{Int}} 
\def\spec{\operatorname{Spec}} 
\def\max{\operatorname{Max}} 
\def\tmax{\operatorname{\textit{t}-max}} 
\def\r{\overline{R} }
\def\ii{(I \colon I)} 
\def\m{{\mathcal M}} 
\def\int{\operatorname{Int}}    
\def\intz{\operatorname{\int(\SZ)}}    
\begin{document}
\title[Flat Ideals and Stability in Integral Domains]
{Flat Ideals and Stability in Integral Domains}
\thanks{2000 {\it Mathematics Subject Classification}.
Primary: 13A15, 13C11, 13F05; Secondary: 13B30, 13G05.
\newline
{\it Key words and phrases}. Flat Ideal, Stable Domain.}
\author{Giampaolo PICOZZA}
\address{Universit\'e Paul C\'ezanne,
Facult\'e des Sciences et Techniques, 13397 Marseille
Cedex 20, France} \email {giampaolo.picozza@univ-cezanne.fr}
\author{Francesca Tartarone}
\address{Universit\`{a}
degli studi Roma Tre, Dipartimento di Matematica,  Largo San
Leonardo Murialdo 1, 00146 Roma, Italy}
\email{tfrance@mat.uniroma3.it}

\date{\today}
\begin{abstract} We introduce the concept of \textit{quasi-stable} ideal in an
integral domain $D$ (a nonzero fractional ideal $I$ of $D$ is
quasi-stable if it is flat in its endomorphism ring $(I \colon I)$)
and study properties of domains in which each nonzero fractional
ideal is quasi-stable. We investigate some questions about flatness
that were raised by S. Glaz and W.V. Vasconcelos in their 1977 paper
\cite{GV}.
\end{abstract}

\maketitle
\section*{Introduction}

Throughout
  the paper $D$ is an integral domain with quotient field $K$, an ideal is a fractional ideal and an integral ideal is
  an ideal contained in $D$.

 The property of flatness for ideals in commutative rings has been
 investigated in many interesting papers. We recall some of them
 that inspired part of this work: J.D. Sally \& W.V. Vasconcelos
 \cite{flat1} (1975), S. Glaz \& W.V. Vasconcelos \cite{GV, flat3} (1977, 1984), D.D. Anderson \cite{and1}(1983) and M.
 Zafrullah \cite{zaflat} (1990).

 More recently many researchers have studied ideals which satisfy the
 following stability criterion:
a nonzero   ideal $I$ of $D$ is \textit{stable} if $I$ is projective
in the endomorphism ring $(I \colon I)$ and a domain $D$ is
\textit{stable}
  if each nonzero   ideal of $D$ is stable ($D$ is
  \textit{finitely stable} if each nonzero finitely generated   ideal  of $D$ is stable).
In particular, stable ideals and domains
  have been widely investigated by D.E. Rush \cite{rush} (1995), B.
Olberding \cite{olb2, O, olb} (1998, 2001, 2002) and H.P. Goeters
\cite{goeters} (1998).
 Some aspects of their work on stability have been also
deepened  by studying   properties of the class semigroup of $D$
such as the Clifford regularity (Cf. S. Bazzoni \cite{baz1,baz2}
(2000, 2001)). Moreover, in \cite{km, km2} S.E. Kabbaj \& A. Mimouni
have strengthened the notion of stable ideal (and domain)
considering the so called \textit{strongly stable ideals}, that is
nonzero   ideals which are  principal in their endomorphism ring
(analogously, a domain $D$ is \textit{strongly stable} if each
nonzero  ideal of $D$ is strongly stable).

In integral domains   the   properties of being projective and
invertible for an ideal $I$ are equivalent (analogously, free is
equivalent to principal), and flatness is a natural generalization
of the projective property. In \cite{goeters} the condition that a
nonzero   ideal $I$ is flat in $(I \colon I)$ is investigated in
Noetherian domains and it is shown that,  if $D$ is Noetherian, this
property holds for each nonzero fractional ideal of $D$ if and only
if $D$ is stable.

In this paper  we attempt to  link the two concepts of flatness and
stability for ideals in integral domains, by considering  {\it
quasi-stable} ideals: a nonzero   ideal $I$ is quasi-stable if it is
flat in $(I \colon I)$. So, the quasi-stable property generalizes
 the stable property  (instead of strengthening it as in
\cite{km}). The study of quasi-stable ideals   has required a more
general investigation on flatness of ideals which turned out to be
useful to deepen some open problems.

Whether flat ideals of integrally closed domains are complete is a
question that has been first posed in \cite{flat1}. In that paper
(and in the following \cite{GV}) the authors address the
divisibility problem for flat ideals, that is, the problem of
deciding when an element belongs to a flat ideal. One of the main
tools in this study is what they called ``the divisibility lemma'',
which is, in modern language, the fact that a flat ideal is a
$w$-ideal. In the introduction of \cite{GV}, the authors say that
the last section of that article ``contains a number of unresolved
questions where the elusive completeness of flat ideals plays a
significant role'' and they add later in the paper that
``unfortunately other that the few cases of \cite{flat1}, not much
seems known'' (the cases are those of Krull domains, GCD-domains and
integrally closed coherent domains, Cf. \cite[Example 1.5]{flat1}).
In Section \ref{sec:flat} we improve the divisibility lemma (Theorem
\ref{thm:tflat}), by showing that a flat ideal is not only a
$w$-ideal, but it is in fact a $t$-ideal, and obtain, by using some
well-known properties of star operations, the completeness of flat
ideals in integrally closed domains.

Another question considered   in   \cite{flat1} and \cite{GV} is
related to the characterization  of domains in which flat ideals are
finitely generated  (and so, invertible). For example, in
\cite[Theorem 3.1]{flat1} it is shown that a flat ideal of a
polynomial ring with finitely generated content is invertible. It is
also observed that flat ideals in Krull domains are invertible. In
\cite[\S 3]{GV} it is conjectured that faithfully flat ideals in
H-domains are invertible (an H-domain is a domain in which every
$t$-maximal ideal is divisorial). We show that this is not true, by
giving a counterexample (Example \ref{conjecture2}). On the other
side, we show that the $t$-finite character on $D$ suffices to have
that all faithfully flat ideals are invertible (Proposition
\ref{ff-t-finite character}).   This result may be related to the
Bazzoni's conjecture \cite{bazzoni}, recently proven in \cite{hmmt}
and in \cite{hk}, which states that all locally invertible (i.e.,
faithfully flat) ideals of a Pr\"ufer domain are invertible if and
only if the domain has the ($t$-)finite character on maximal ideals.

In Section \ref{sec:quasi-stable}, with the necessary assumption of
the $t$-finite character, we characterize stable domains as the
domains in which each ideal is faithfully flat in its endomorphism
ring (Proposition \ref{t-finite character}). So, it seems natural to
define a new class of domains,  the \emph{quasi-stable domains},
that is, the domains such that each nonzero ideal is flat in its
endomorphism ring. We show that this class is strictly larger than
the class of stable domains (this is easy to see) and, with an
elaborate example, that it is smaller than the class of finitely
stable domains, even if these two classes coincide for Noetherian
and integrally closed domains.

In Section \ref{sec:overrings}, we study overrings and localizations
of quasi-stable domains and   show that they are still quasi-stable
  in some significant cases.

\section{Flat ideals and $t$-ideals} \label{sec:flat}

We recall some basic terminology and notions about divisorial
ideals, $t$-ideals and $w$-ideals. Given a domain $D$ with quotient
field $K$, we put $\FF(D)$ to be the set of nonzero $D$-modules
contained in $K$, $\f(D)$  the set of nonzero finitely generated
$D$-modules contained in $K$ and
  $\F(D)$ the set of nonzero fractional ideals of $D$.

If $I$ is a nonzero ideal of $D$, then:

\begin{itemize}
\item the \textit{divisorial closure} of $I$ is the ideal $I_v := (D \colon (D \colon
I))$, where $(D \colon H) := H^{-1}:= \{x \in K \mid xH \subseteq
D\}$, for each $H \in \F(D)$ ;

\item the \textit{$t$-closure} of $I$ is the ideal
$I_t := \bigcup_{J \in \f(D), \, J \subseteq I} J_v.$

\item  the {\it $w$-closure} of $I$ is the ideal $I_w :=
\bigcup_{J \in \f(D),  \, J_v=D}(I \colon J).$

\end{itemize}

 An ideal $I \in \F(D)$ is \textit{divisorial} (respectively, a $t$-ideal or a $w$-ideal) if
 $I=I_v$ (respectively, $I=I_t$ or $I=I_w$). For each $I \in \F(D)$, the following inclusions hold: $I \subseteq
I_w \subseteq I_t \subseteq I_v$.

An ideal $I$ is \textit{$t$-finite} if there exists a finitely
generated ideal $J \subseteq I$ such that $J_t = I_t$.

The $v$-, $t$- and $w$-operations are particular
\textit{star-operations} (see, for instance, \cite{om, fh}). The
$t$-operation is a \textit{star-operation of finite type}, that is,
for each $H \in \F(D)$:
$$H_t:=\bigcup \{F_t \mid F \subseteq H, \,\, F \in
\f(D)\}.$$

Moreover, $t$ is   maximal among the star-operations of finite type
on $D$  that is, if $\star$ is a finite type star-operation on $D$,
then $\star \leq t$ (i.e., $H_{\star} \subseteq H_t$, for each $H
\in \F(D)$).

An ideal of a domain $D$ is flat if it is flat as a $D$-module. A
useful characterization of flat ideals in integral domains is the
following (\cite[Theorem 2]{and1}):

\begin{proposition} \label{char:flatness} Let  $D$ be an integral domain. An ideal $I$ of $D$ is flat if and
only if $(A \cap B)I = AI \cap BI$ for all   $A,B \in \F(D)$.
\end{proposition}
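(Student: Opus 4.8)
The plan is to prove the two implications separately, in each case translating flatness into its standard homological avatar. For a torsion-free module such as an ideal $I \subseteq K$, flatness is equivalent to the vanishing of $\mathrm{Tor}_1^D(D/\mathfrak{a}, I)$ for every finitely generated ideal $\mathfrak{a}$ of $D$; reading this off the long exact sequence attached to $0 \to \mathfrak{a} \to D \to D/\mathfrak{a} \to 0$, it amounts to the multiplication map $\mu_{\mathfrak{a}} \colon \mathfrak{a} \otimes_D I \to \mathfrak{a}I$ being an isomorphism. The bridge between this criterion and the intersection condition is the observation that, when $I$ is flat, tensoring the inclusion $A \hookrightarrow K$ by $I$ stays injective, so $A \otimes_D I \hookrightarrow K \otimes_D I \cong K$ identifies $A \otimes_D I$ with the fractional ideal $AI$ (the image of $a \otimes x$ being $ax$) for every $A \in \F(D)$.

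For the forward implication, assume $I$ is flat and fix $A, B \in \F(D)$. I would start from the exact sequence
$$0 \longrightarrow A \cap B \xrightarrow{x \mapsto (x,-x)} A \oplus B \xrightarrow{(u,v)\mapsto u+v} A + B \longrightarrow 0,$$
whose kernel at the middle term is the anti-diagonal copy of $A \cap B$. Tensoring with the flat module $I$ preserves exactness, and applying the identification above to each of $A$, $B$, $A+B$ and $A \cap B$ turns this into an exact sequence of submodules of $K$, with the analogous maps relating $(A\cap B)I$, $AI \oplus BI$ and $(A+B)I$. Exactness says the image of $(A \cap B)I$, namely $\{(w,-w) : w \in (A\cap B)I\}$, equals the kernel of $AI \oplus BI \to (A+B)I$, namely $\{(w,-w) : w \in AI \cap BI\}$; comparing the two descriptions yields $(A \cap B)I = AI \cap BI$.

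For the converse, assume $(A \cap B)I = AI \cap BI$ for all $A, B \in \F(D)$, and show $\mu_{\mathfrak{a}}$ is injective for every finitely generated ideal $\mathfrak{a} \subseteq D$ by induction on the number of generators; the principal case is automatic since $I$ is torsion-free. For the inductive step write $\mathfrak{a} = \mathfrak{b} + \mathfrak{c}$ with $\mathfrak{c} = (a_n)$ and $\mathfrak{b} = (a_1, \dots, a_{n-1})$, and tensor the sum--intersection sequence to get (by right exactness)
$$(\mathfrak{b}\cap\mathfrak{c}) \otimes_D I \longrightarrow (\mathfrak{b}\otimes_D I)\oplus(\mathfrak{c}\otimes_D I) \longrightarrow \mathfrak{a}\otimes_D I \longrightarrow 0.$$
Given $\xi \in \ker \mu_{\mathfrak{a}}$, lift it to $(\beta,\gamma)$; then $w := \mu_{\mathfrak{b}}(\beta) = -\mu_{\mathfrak{c}}(\gamma)$ lies in $\mathfrak{b}I \cap \mathfrak{c}I$, which by hypothesis equals $(\mathfrak{b}\cap\mathfrak{c})I$. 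This is the crux: the intersection hypothesis is exactly what forces the defect $w$ to originate from $\mathfrak{b}\cap\mathfrak{c}$. Choosing $\zeta \in (\mathfrak{b}\cap\mathfrak{c})\otimes_D I$ with $\mu(\zeta) = w$ and subtracting its two images in $\mathfrak{b}\otimes_D I$ and $\mathfrak{c}\otimes_D I$, the inductive injectivity of $\mu_{\mathfrak{b}}$ and $\mu_{\mathfrak{c}}$ forces $(\beta,\gamma)$ to equal the image of $\zeta$, whence $\xi = 0$ because the composite of the two maps in the displayed sequence vanishes.

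The one point that needs care, and which I expect to be the main obstacle, is that $\mathfrak{b}\cap\mathfrak{c}$ need not be finitely generated nor have fewer generators than $\mathfrak{a}$, so the induction cannot be run on it directly. The argument sidesteps this by only ever invoking $\mu_{\mathfrak{b}\cap\mathfrak{c}}$ through its \emph{surjectivity} onto $(\mathfrak{b}\cap\mathfrak{c})I$ (used merely to produce $\zeta$), never its injectivity. Beyond this structural subtlety, the only fussy part will be the bookkeeping of the diagram chase: keeping straight the two copies of $\mathfrak{b}\cap\mathfrak{c}$ sitting inside $\mathfrak{b}$ and $\mathfrak{c}$ and the signs coming from the map $x \mapsto (x,-x)$.
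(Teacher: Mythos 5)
Your proof is correct. There is nothing in the paper to compare it with: the proposition is stated without proof and attributed to D.D. Anderson (\cite[Theorem 2]{and1}), and your argument --- tensoring the sequence $0 \to A\cap B \to A\oplus B \to A+B \to 0$ with the flat module $I$ and transporting it along the identification $A\otimes_D I \cong AI$ for one direction, and the $\mathrm{Tor}_1(D/\mathfrak a, I)$ criterion with induction on the number of generators of $\mathfrak a$ for the other --- is the standard proof of Anderson's theorem. You also correctly flag and resolve the only delicate point, namely that $\mathfrak b \cap (a_n)$ need not be finitely generated, so the induction must use only the (automatic) surjectivity of $(\mathfrak b \cap (a_n))\otimes_D I \to (\mathfrak b \cap (a_n))I$ rather than its injectivity.
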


Being projective, invertible ideals are flat. We give a short proof
of this fact, by using the previous characterization. Note that it
is always true that, if $A,B$ and $C$ are (fractional) ideals of
$D$, then $C(A \cap B) \subseteq CA \cap CB$. So, let $I$ be
invertible and $A$ and $B$ ideals of $D$. Then: $$IA \cap IB =
II^{-1}(IA \cap IB) \subseteq I(I^{-1}IA \cap I^{-1}IB)= I(A \cap
B).$$ Thus $I$ is flat.

Note that flat ideals are not always invertible. For example, we
recall that Pr\"ufer domains are exactly the domains in which each
ideal is flat (\cite[Theorem 25.2]{gilmer}  and Proposition
\ref{char:flatness}). So, in a non-Dedekind Pr\"ufer domain, each
non finitely generated ideal is flat but not invertible (we can take
$D := \textrm{Int}(\SZ) :=\{f(X) \in \SQ[X] \mid f(\SZ) \subseteq
\SZ \}$, \cite[\S \, 6]{cc}.)

On the contrary, it is well-known that  even in the more general
context of rings with zero divisors  finitely generated ideals are
flat if and only if they are projective. So, in a domain, finitely
generated flat ideals are invertible. More precisely we have the
following (Cf. \cite[Proposition 1]{zaflat}):

\begin{proposition}\label{prop:flat_inv} Let $D$ be an integral domain and $I$ a $t$-finite  ideal of $D$.
 Then $I$ is flat if and only if it is
invertible.
\end{proposition}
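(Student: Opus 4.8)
The plan is to treat the two implications separately, with essentially all the content in the forward direction. The reverse implication is already at hand: invertible ideals were shown above to be flat, and an invertible ideal is finitely generated, so the $t$-finiteness hypothesis is automatically satisfied; thus ``invertible $\Rightarrow$ flat'' requires no further work. For the converse, suppose $I$ is flat and $t$-finite, and fix a finitely generated ideal $J = (a_1, \ldots, a_n) \subseteq I$ with $J_t = I_t$, discarding any zero generators so that each $a_i \neq 0$. Since the inverse of an ideal depends only on its $t$-closure (one always has $(D \colon H) = (D \colon H_t)$ for $H \in \F(D)$, because $H \subseteq H_t \subseteq H_v$ and $(D \colon H) = (D \colon H_v)$), the equality $J_t = I_t$ gives $J^{-1} = I^{-1}$. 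This is the only place the $t$-finiteness hypothesis is used, and it is precisely what lets us replace the possibly ill-behaved $I^{-1}$ by the inverse of a finitely generated ideal.

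The heart of the argument is to feed the right test ideals into the flatness criterion of Proposition \ref{char:flatness}. First I would upgrade that criterion from two ideals to finitely many by an immediate induction: since $A \cap B$ is again a nonzero fractional ideal whenever $A, B \in \F(D)$ are, one gets $(\bigcap_{i=1}^{n} A_i) I = \bigcap_{i=1}^{n} A_i I$ for any $A_1, \ldots, A_n \in \F(D)$. I would then apply this with the principal fractional ideals $A_i := a_i^{-1} D \in \F(D)$. The key observation is that their intersection is exactly the inverse of $J$: indeed $x \in \bigcap_i a_i^{-1} D$ if and only if $x a_i \in D$ for all $i$, i.e. $x J \subseteq D$, so $\bigcap_i a_i^{-1} D = (D \colon J) = J^{-1} = I^{-1}$. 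Substituting into the finite-intersection identity yields
\[
I I^{-1} = \Big(\bigcap_{i=1}^{n} a_i^{-1} D\Big) I = \bigcap_{i=1}^{n} a_i^{-1} I .
\]

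It then remains only to read off invertibility. Because each $a_i$ lies in $I$, we have $1 = a_i^{-1} a_i \in a_i^{-1} I$ for every $i$, hence $1 \in \bigcap_{i=1}^{n} a_i^{-1} I = I I^{-1}$. Since $I I^{-1} \subseteq D$ always holds, this forces $I I^{-1} = D$, i.e. $I$ is invertible. I expect the main obstacle to be one of discovery rather than technical difficulty: the whole proof hinges on guessing that the correct ideals to insert into Proposition \ref{char:flatness} are the $a_i^{-1} D$ built from the generators of the finitely generated ``core'' $J$, and on recognizing that their intersection collapses to $I^{-1}$ exactly because $t$-finiteness gives $J^{-1} = I^{-1}$. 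Once those two points are in place the computation is routine, and notably no localization or appeal to the structure theory of flat modules is required.
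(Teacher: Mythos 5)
Your proof is correct and follows essentially the same route as the paper: both reduce to a finitely generated $J=(a_1,\dots,a_n)\subseteq I$ with $J_t=I_t$ (hence $J^{-1}=I^{-1}$), write $J^{-1}=\bigcap_i a_i^{-1}D$, and use the flatness criterion of Proposition \ref{char:flatness} to pull $I$ inside the intersection, concluding $II^{-1}\supseteq D$ from $a_i\in I$. You merely make explicit a few details the paper leaves implicit (the finite-intersection induction and the identity $(D\colon H)=(D\colon H_t)$).
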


\begin{proof}
We have already shown that invertible ideals are flat. So, let $I$
be a $t$-finite ideal. Then, there exists an ideal $J=(a_1, a_2,
\ldots, a_n)$, $J \subseteq I,$    such that $J_t = I_t$
(and so $I^{-1}=J^{-1}$). We have that:

\begin{align*}
 D \supseteq II^{-1} = IJ^{-1}&= I(a_1^{-1}D \cap a_2^{-1}D \cap
\ldots \cap a_n^{-1}D) \\    &= (Ia_1^{-1}D \cap Ia_2^{-1}D \cap
\ldots \cap Ia_n^{-1}D) \supseteq D,
\end{align*}

 \nt where the third equality holds for the flatness of $I$ over $D$. Thus, $II^{-1}=D$ and $I$ is invertible. \end{proof}

A consequence of this fact is that in   Krull and Noetherian domains
(and more in general in Mori domains), the flat ideals are exactly
the invertible ideals ({\cite[Corollary 4]{zaflat}).

It is known that flat ideals are $w$-ideals (or semidivisorial
ideals, in the language of Glaz \& Vasconcelos, \cite[Corollary
2.3]{GV}). We can show that flat ideals are in fact $t$-ideals. We
will use the following  lemma.

\begin{lemma} \label{lemma:ij-1} Let $D$ be an integral domain, $J$ a   nonzero  finitely generated ideal of $D$.
If $I$ is a flat ideal of $D$, then $(I:J)=IJ^{-1}$.
\end{lemma}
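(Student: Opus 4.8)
The plan is to prove the two inclusions separately, with only $(I \colon J) \subseteq IJ^{-1}$ requiring flatness. First I would record the easy inclusion $IJ^{-1} \subseteq (I \colon J)$, which holds for \emph{every} ideal $I$ and uses no hypothesis on $I$: since $J^{-1}J \subseteq D$, we have $(IJ^{-1})J = I(J^{-1}J) \subseteq I$, so each element of $IJ^{-1}$ carries $J$ into $I$.

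For the reverse inclusion, write $J = (a_1, \dots, a_n)$ with every $a_i \neq 0$; this is legitimate since $J \neq 0$, so we may discard any zero generators. The key idea is that both $(I \colon J)$ and $IJ^{-1}$ can be rewritten as the \emph{same} finite intersection of the fractional ideals $a_i^{-1} I$. On the colon side, the definition of $(I \colon J)$ together with $J = \sum_i a_i D$ gives $(I \colon J) = \bigcap_{i=1}^n (I \colon a_i D) = \bigcap_{i=1}^n a_i^{-1} I$, because $x J \subseteq I$ is equivalent to $x a_i \in I$ for all $i$. On the product side, the same finite generation yields $J^{-1} = (D \colon J) = \bigcap_{i=1}^n a_i^{-1} D$.

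It then remains to distribute $I$ across this finite intersection, and this is exactly where flatness enters. Proposition \ref{char:flatness} supplies $(A \cap B)I = AI \cap BI$ for two ideals; applying it to the principal fractional ideals $a_i^{-1}D$ and inducting on $n$ (the partial intersections $\bigcap_{i=1}^{k} a_i^{-1}D$ are again fractional ideals, being the inverse of $(a_1,\dots,a_k)$, so the criterion applies at each step) extends this to $I\bigl(\bigcap_{i=1}^n a_i^{-1}D\bigr) = \bigcap_{i=1}^n I a_i^{-1} D = \bigcap_{i=1}^n a_i^{-1} I$. Combining the three computations gives $IJ^{-1} = \bigcap_{i=1}^n a_i^{-1} I = (I \colon J)$, the desired equality.

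The only delicate point, and the main (though minor) obstacle, is the passage from the two-factor flatness criterion of Proposition \ref{char:flatness} to the $n$-factor version used above; once that induction is established, the rest of the argument is purely formal manipulation of colon and inverse ideals.
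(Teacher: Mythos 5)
Your proof is correct and follows essentially the same route as the paper: write $J=(a_1,\dots,a_n)$, identify $(I\colon J)=\bigcap_i a_i^{-1}I$ and $J^{-1}=\bigcap_i a_i^{-1}D$, and use the flatness criterion of Proposition \ref{char:flatness} to distribute $I$ over the intersection. The only difference is that you spell out the easy inclusion and the induction from two factors to $n$ factors, details the paper's one-line chain of equalities leaves implicit.
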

\begin{proof}
Let $J = (a_1, a_2, \ldots, a_n)$. Then, by the flatness of $I$, we
have that: $$(I:J)=(a_1^{-1}I \cap a_2^{-1}I \cap \ldots \cap
a_n^{-1}I)= I(a_1^{-1}D \cap a_2^{-1}D \cap \ldots \cap a_n^{-1}D) =
IJ^{-1}.$$ \end{proof}

\begin{theorem} \label{thm:tflat}
Let $D$ be an integral domain and $I$ be a nonzero ideal of $D$. If $I$
is flat then $I$ is a $t$-ideal.
\end{theorem}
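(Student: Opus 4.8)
The plan is to establish the nontrivial inclusion $I_t \subseteq I$, since $I \subseteq I_t$ always holds. By the very definition of the $t$-closure recalled above, $I_t = \bigcup_J J_v$, where $J$ ranges over the finitely generated ideals contained in $I$. Hence it suffices to prove that $J_v \subseteq I$ for every such $J$, and the whole theorem reduces to this statement about a single finitely generated $J$.

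So I would fix a finitely generated ideal $J \subseteq I$ and pick $x \in J_v$. Recalling that $J_v = (D : J^{-1})$, the membership $x \in J_v$ says exactly that $x J^{-1} \subseteq D$. This scalar condition is what I intend to feed into Lemma \ref{lemma:ij-1}.

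The key step is to multiply the identity of Lemma \ref{lemma:ij-1} by $x$. Since $J$ is finitely generated and $I$ is flat, that lemma gives $(I:J) = IJ^{-1}$, and therefore
$$x(I:J) = xIJ^{-1} = I(xJ^{-1}) \subseteq ID = I,$$
where the inclusion uses precisely $xJ^{-1} \subseteq D$. On the other hand, $J \subseteq I$ forces $1 \in (I:J)$, so $D \subseteq (I:J)$. Combining the two gives $x \in xD \subseteq x(I:J) \subseteq I$, which is what I wanted, namely $J_v \subseteq I$. Taking the union over all finitely generated $J \subseteq I$ yields $I_t \subseteq I$, hence $I = I_t$.

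I expect the only real subtlety to be locating the right place to invoke flatness. A naive attempt would try to imitate the computation of Lemma \ref{lemma:ij-1} directly with $J$ replaced by $J_v$, or by $I$, inside the colon; but flatness (via Proposition \ref{char:flatness}) only controls finite intersections and does not commute with the infinite intersections that $(D:J^{-1})$ or a non-finitely-generated ideal would introduce, so this does not work. The point of the argument is that Lemma \ref{lemma:ij-1} has already extracted everything flatness can give, packaging the finitely generated ideal $J$ into the single identity $(I:J) = IJ^{-1}$, after which the passage to the divisorial closure $J_v$ costs nothing and is handled entirely by the elementary condition $xJ^{-1} \subseteq D$. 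Once the reduction to finitely generated $J$ and this multiplication trick are in place, no further appeal to flatness is needed, and the result strengthens the known fact that flat ideals are $w$-ideals.
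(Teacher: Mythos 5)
Your proof is correct and follows essentially the same route as the paper: both arguments reduce to showing $J_v \subseteq I$ for finitely generated $J \subseteq I$ and both hinge on Lemma \ref{lemma:ij-1}; your element-wise computation $x(I:J) = I(xJ^{-1}) \subseteq I$ for $x \in J_v$ is just a pointwise rephrasing of the paper's chain $(I:J) = I(J_v)^{-1} \subseteq (I:J_v)$. Your closing remark about where flatness is genuinely needed (only for finite intersections, already packaged into the lemma) accurately reflects the structure of the paper's argument.
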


\begin{proof}
Let $J$ be a nonzero finitely generated ideal. Then, since $I$ is
flat, $(I \colon J) = IJ^{-1}$ (by Lemma \ref{lemma:ij-1}). Now,
$J^{-1} = (J_v)^{-1}$, hence:

$$(I \colon J) = IJ^{-1} = I(J_v)^{-1}= I \bigcap_{x \in J_v}\frac{1}{x}D \subseteq
\bigcap_{x \in J_v}\frac{1}{x}I = (I \colon J_v) \subseteq (I
\colon J).$$

Thus $(I \colon J) = (I \colon J_v)$. If $J \subseteq I$, then $1
\in (I \colon J)$. So, $1 \in (I \colon J_v)$, that is $J_v
\subseteq I$. Hence $I = I_t$. \end{proof}

\begin{remark} \label{remark1}
(1) A divisorial ideal (and so a $t$-ideal) is not always flat. For
instance, take a non-integrally closed domain $D$ in which each
ideal is divisorial (e.g.,  a pseudo-valuation, non valuation,
domain such that the associated valuation domain is two-generated as
a $D$-module  \cite[Corollary 1.8]{pvd2}). Then $D$ has, at least, a
nonzero ideal which is not flat,
 otherwise $D$ would be a  a valuation domain.

(2) Note that prime flat   $t$-ideals  are \textit{well-behaved} in
the sense of Zafrullah (a prime $t$-ideal $P$ of  $D$ is
well-behaved if $PD_P$ is a $t$-ideal in $D_P$ \cite{zaf}). This
follows from the fact that, for ideals, flat implies locally flat,
and flat implies $t$-ideal. So, a prime $t$-ideal which is not
well-behaved is not flat.

(3) In \cite[Proposition 10]{zaflat}, M. Zafrullah has shown that
the integral domains in which each $t$-ideal is flat are precisely
the generalized GCD domains (G-GCD domains) defined in \cite{aa},
that is the domains in which each $t$-finite $t$-ideal is
invertible.
\end{remark}

An immediate corollary of Theorem \ref{thm:tflat} is that in the
statement of Lemma \ref{lemma:ij-1} $J$ can be taken $t$-finite
instead of finitely generated.

\begin{corollary} Let $D$ be an integral domain and $J$ be a nonzero
$t$-finite ideal of $D$.
If $I$ is a flat ideal of $D$ then $(I:J)=IJ^{-1}$.
\end{corollary}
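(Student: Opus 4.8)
The plan is to reduce the $t$-finite case to the finitely generated case already handled in Lemma \ref{lemma:ij-1}. Since $J$ is $t$-finite, I would choose a finitely generated ideal $J' \subseteq J$ with $J'_t = J_t$. The crucial preliminary observation is that passing from $J'$ to $J$ does not change the inverse: from $J'_t = J_t$ one gets $J'_v = (J'_t)_v = (J_t)_v = J_v$ (using that $H \subseteq H_t \subseteq H_v$ forces $(H_t)_v = H_v$ for every $H \in \F(D)$), and since $H^{-1}$ depends only on $H_v$ this yields $(J')^{-1} = J^{-1}$.

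With this in hand I would sandwich $(I:J)$ between two copies of $IJ^{-1}$. For the lower bound, the inclusion $IJ^{-1} \subseteq (I:J)$ holds for free, since $J^{-1}J = (D:J)J \subseteq D$ gives $IJ^{-1}J \subseteq I$. For the upper bound, $J' \subseteq J$ immediately gives $(I:J) \subseteq (I:J')$, and Lemma \ref{lemma:ij-1} applied to the finitely generated ideal $J'$ (this is where the flatness of $I$ enters) yields $(I:J') = I(J')^{-1}$. Combining these with $(J')^{-1}=J^{-1}$ produces the chain
$$IJ^{-1} \subseteq (I:J) \subseteq (I:J') = I(J')^{-1} = IJ^{-1},$$
forcing equality throughout, and in particular $(I:J)=IJ^{-1}$.

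I expect the only delicate point to be the star-operation bookkeeping that $J'_t=J_t$ implies $(J')^{-1}=J^{-1}$; everything else is a one-line inclusion or a direct citation of Lemma \ref{lemma:ij-1}. As an alternative to the pure sandwich, one can instead invoke Theorem \ref{thm:tflat} to get that $I$ is a $t$-ideal and upgrade $(I:J)\subseteq(I:J')$ to an equality directly: for $x\in(I:J')$ one has $xJ_t = (xJ')_t \subseteq I_t = I$, since $t$ commutes with multiplication by $x$ and $I=I_t$, whence $x\in(I:J)$. Either route closes the argument, but the sandwich is slightly cheaper, as it needs only the inclusion $(I:J)\subseteq(I:J')$ rather than the full equality.
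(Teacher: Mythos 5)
Your proposal is correct, and your primary (sandwich) route is genuinely different from the paper's argument. The paper picks a finitely generated $H \subseteq J$ with $H_t = J_t$ and writes the single chain $(I:J) = (I_t:J_t) = (I_t:H_t) = (I:H) = IH^{-1} = IJ^{-1}$, which leans on Theorem \ref{thm:tflat} (flat implies $t$-ideal) to justify the first equality $(I:J)=(I_t:J_t)$; only the step $(I:H)=IH^{-1}$ comes from Lemma \ref{lemma:ij-1}. Your sandwich
$IJ^{-1} \subseteq (I:J) \subseteq (I:J') = I(J')^{-1} = IJ^{-1}$
bypasses Theorem \ref{thm:tflat} entirely: the lower bound is the free inclusion $IJ^{-1}J \subseteq I$, the upper bound needs only the one-sided containment $(I:J)\subseteq(I:J')$ together with Lemma \ref{lemma:ij-1}, and the bookkeeping $(J')^{-1}=J^{-1}$ follows from $J'_t=J_t$ exactly as you say (since $H^{-1}=(H_t)^{-1}=(H_v)^{-1}$ for any $H$). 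This makes your argument slightly more elementary and self-contained, at the cost of not exhibiting the structural fact that $I$ and $J$ can be replaced by their $t$-closures; your stated alternative route, upgrading $(I:J)\subseteq(I:J')$ to an equality via $xJ_t=(xJ')_t\subseteq I_t=I$, is essentially the paper's proof. Both are valid.
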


\begin{proof}
Let $H$ be a finitely generated ideal of $D$ such that $H_t= J_t$.
By Theorem \ref{thm:tflat} and Lemma \ref{lemma:ij-1} it follows
that: $$(I:J) = (I_t:J_t) = (I_t:H_t) = (I:H) = IH^{-1} = IJ^{-1}.$$
\end{proof}

We recall  that for each $I \in \F(D)$,  the $b$-closure of $I$ is
defined as follows:
$$I^b := \bigcap IV_\alpha,$$

\nt where the intersection is taken over all valuation overrings
$V_{\alpha}$ of $D$. An ideal $I$ is called \emph{complete} if it is
a $b$-ideal, that is, if $I^b = I$ (\cite[\S \, 24]{gilmer}). As
shown in \cite[Appendix 4, Theorem 1]{zariski-samuel}, the
$b$-closure of an ideal of $D$ coincides with the integral closure
of $I$ in $K$.
By \cite[Appendix 4, Theorem 1]{zariski-samuel} and the definition
of integral dependence and integral closure it follows easily that,
if $D$ is integrally closed, the $b$-operation is a star-operation
and it is of finite type. If $D$ is not integrally closed, the
$b$-closure can be still defined as above for each $I \in \F(D)$,
but in this case it is not a star-operation; it is actually a
semistar operation, which is a generalization of star-operation,
that we don't need to discuss in this context.

In \cite[Conjecture, p.16]{GV}, the authors conjecture that a flat
ideal of an integrally closed domain is complete.

\begin{theorem} \label{conj1} (Cf. \cite[Conjecture, p.16]{GV})
Every flat ideal of an integrally closed domain is complete.
\end{theorem}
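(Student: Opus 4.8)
The plan is to combine Theorem \ref{thm:tflat} with the maximality of the $t$-operation among star-operations of finite type. The decisive observation is the one recorded just above the statement: when $D$ is integrally closed, the $b$-operation (completion) is itself a star-operation \emph{of finite type}. This places $b$ inside the hierarchy of finite-type star-operations, at the top of which sits $t$, and that is all the leverage we need.

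First I would record the trivial inclusion that holds without any hypothesis on $D$. For any $I \in \F(D)$ and any valuation overring $V_\alpha$ of $D$ we have $1 \in V_\alpha$, hence $I \subseteq IV_\alpha$; intersecting over all $\alpha$ gives $I \subseteq I^b$. Next I would invoke the finite-type property: since $D$ is integrally closed, $b$ is a finite-type star-operation, so by the maximality of $t$ among such operations (stated in the preliminaries, $\star \leq t$ for every finite-type $\star$) we obtain $b \leq t$, i.e. $I^b \subseteq I_t$ for every $I \in \F(D)$.

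Finally I would bring in flatness. If $I$ is flat, then Theorem \ref{thm:tflat} yields $I = I_t$. Chaining the inclusions gives
$$I \subseteq I^b \subseteq I_t = I,$$
which forces $I = I^b$, so $I$ is complete.

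I do not expect a genuine obstacle here, since the substantive work is already done in Theorem \ref{thm:tflat}: the whole argument reduces to the fact that $t$ dominates every finite-type star-operation, together with the identification of $b$ as such an operation. The one point that genuinely requires care — and the reason the hypothesis cannot be dropped — is that $b$ is a star-operation only when $D$ is integrally closed; otherwise $b$ is merely a semistar-operation and the comparison $b \leq t$ is no longer available, so the integral-closure assumption is precisely what makes the comparison step legitimate.
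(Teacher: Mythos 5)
Your argument is correct and is essentially identical to the paper's proof: both rely on the observation that $b$ is a finite-type star-operation on an integrally closed domain, hence $b \leq t$, and then apply Theorem \ref{thm:tflat} to conclude $I \subseteq I^b \subseteq I_t = I$. The only difference is that you spell out the trivial inclusion $I \subseteq I^b$ explicitly, which the paper leaves implicit.
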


\begin{proof} Let $D$ be an integrally closed domain.
As remarked above,  the $b$-operation on $D$ is a star operation of
finite type,
 so  $b \leq t$, that is, $I^b \subseteq I_t$,
for each $I \in \F(D)$. Thus $t$-ideals are complete. From Theorem
\ref{thm:tflat}, flat ideals are $t$-ideals, whence they are
complete.
\end{proof}

In \cite[p.16]{GV}  the authors prove that \textit{if $A$ is an
integrally closed domain of characteristic $2$, then an idempotent
flat ideal of $A$ is a radical ideal.} By using Theorem \ref{conj1},
we can prove this result in any characteristic.

\begin{proposition}
Let $D$ be an integrally closed domain. Then, an idempotent flat
ideal of $D$ is a radical ideal.
\end{proposition}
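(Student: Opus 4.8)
The plan is to exploit Theorem \ref{conj1} to replace flatness by completeness, and then to use the description of the $b$-closure as an integral closure of ideals. Let $I$ be an idempotent flat ideal of the integrally closed domain $D$, so $I^2 = I$ and hence $I^n = I$ for every $n \geq 1$. By Theorem \ref{conj1}, $I$ is complete, i.e. $I = I^b$. The key point I would invoke is the identification, recalled just before Theorem \ref{conj1} (from \cite[Appendix 4, Theorem 1]{zariski-samuel}), of $I^b$ with the integral closure of $I$ in $K$: an element $x \in K$ lies in $I^b$ if and only if it satisfies a relation of integral dependence over $I$, that is, an equation
$$x^m + c_1 x^{m-1} + \cdots + c_m = 0, \qquad c_j \in I^j \ (1 \leq j \leq m).$$

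With this in hand the argument is short. Since $I \subseteq \sqrt{I}$ always holds, it suffices to prove the reverse inclusion $\sqrt{I} \subseteq I$. So let $x \in \sqrt{I}$, meaning $x^n \in I$ for some $n \geq 1$. Because $I$ is idempotent, $I = I^n$, and therefore $x^n \in I^n$. First I would observe that this membership already furnishes a relation of integral dependence of $x$ over $I$: taking $c_n := -x^n \in I^n$ and all other $c_j := 0$, the element $x$ satisfies $x^n + c_n = 0$ with $c_n \in I^n$. Consequently $x$ is integral over $I$, so $x \in I^b = I$. This gives $\sqrt{I} \subseteq I$, whence $I = \sqrt{I}$ is radical.

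The only genuinely delicate step is the passage through the $b$-closure, and here two things must be checked: that Theorem \ref{conj1} applies (it does, since $D$ is integrally closed and $I$ is flat), and that the $b$-closure may legitimately be read as the classical integral closure of the ideal with its monic dependence equations. Both are already established in the excerpt, so no new difficulty arises. I do not expect any serious obstacle: once completeness is available, the idempotence of $I$ collapses all the powers $I^j$ to $I$ itself, turning the single datum $x^n \in I$ into an integral-dependence relation essentially for free. Compared with the characteristic-$2$ argument of \cite{GV}, this approach removes any restriction on the characteristic precisely because it replaces the ad hoc computation by the structural fact that flat ideals in integrally closed domains are complete.
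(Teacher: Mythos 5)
Your proof is correct, but it takes a genuinely different route from the paper's. The paper also starts from Theorem \ref{conj1} (flat implies complete), but then works with the \emph{defining} form of the $b$-closure, $I^b = \bigcap_\alpha IV_\alpha$ over all valuation overrings $V_\alpha$: since $IV_\alpha$ is idempotent, it is a prime ideal of the valuation domain $V_\alpha$ (Gilmer, Theorem 17.1), so completeness exhibits $I = \bigcap_\alpha (IV_\alpha \cap D)$ as an intersection of prime ideals of $D$, hence radical. You instead invoke the Zariski--Samuel identification of $I^b$ with the integral closure of $I$ and produce the dependence relation directly: $x^n \in I = I^n$ gives $x^n + c_n = 0$ with $c_n = -x^n \in I^n$, so $\sqrt{I} \subseteq I^b = I$. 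Your equational step is valid (it is the standard fact that $x^n \in I^n$ forces $x \in \overline{I}$, here available for free because idempotence collapses all powers), and it has the advantage of isolating a general statement --- $\sqrt{I} \subseteq \overline{I}$ for any idempotent ideal in any domain --- with integral closedness and flatness used only to get $\overline{I} = I^b = I$. The paper's argument needs the structure theory of valuation domains but yields slightly more information, namely an explicit presentation of $I$ as an intersection of primes.
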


\begin{proof}
Let $I$ be a flat, idempotent ideal of  $D$. By hypothesis, $D =
\bigcap_{\alpha \in A}V_{\alpha}$, where $\{V_{\alpha}\}_{\alpha \in
A}$ are all the valuation overrings of $D$. Then, for each
 $\alpha \in A$, $IV_\alpha$ is idempotent and so prime
(\cite[Theorem 17.1]{gilmer}). Let $IV_\alpha = P_\alpha$. Since $I$
is flat, then $I$ is complete (Theorem  \ref{conj1}) and so $I =
\bigcap_{\alpha \in A} IV_\alpha = \bigcap_{\alpha \in A} P_\alpha =
\bigcap_{\alpha \in A} (P_\alpha \cap D)$ is an intersection of
prime ideals. Thus $I$ is a radical ideal.
\end{proof}

\begin{remark}
 Note that if all ideals of a domain $D$ are complete then $D$
is a Pr\"ufer domain (\cite[Theorem 24.7]{gilmer}) and so all ideals
are flat (\cite[Theorem 25.2 (c)]{gilmer}). In general, it is not
always true that complete ideals are flat. For instance,   a prime
ideal $P$ of an integrally closed domain $D$ is always complete
since there  always exists a valuation overring of $D$ centered on
$P$ (\cite[Theorem 19.6]{gilmer}).  But, obviously, $P$ is not
always a $t$-ideal thus, in particular, it is not always flat. Such
an example is given by an height-$2$ prime ideal of $\mathbb{Z}[X]$.
In fact, since $\SZ[X]$ is a Krull domain, it is well-known that the
only prime $t$-ideals are the height-one primes.

\end{remark}

We recall that a domain $D$ is an \textit{H-domain} if for each
ideal $I$ of $D$ such that $I^{-1} =D$, there exists a finitely
generated ideal $J \subseteq I$ such that $J^{-1}=  D$. In
\cite[Proposition 2.4]{hz} it is shown that this is equivalent to
the fact that   each $t$-maximal ideal of $D$ (i.e., an ideal  which
is maximal in the set of $t$-ideals of $D$) is divisorial.

In \cite[Proposition 1.1]{GV} it is shown that an ideal $I$ of a
domain $D$ is faithfully flat (as a $D$-module) if and only if it is
flat and locally finitely generated. This is equivalent to saying
that $I$ is faithfully flat if and only if it  is locally invertible
(\cite[Theorem 8]{aa2}).

A second conjecture  stated in \cite[p.9]{GV} is the following: \ec

\medskip

\textit{Conjecture 2 (Cf. \cite[Conjecture, p.9]{GV}): A faithfully
flat ideal in an H-domain is finitely generated.}

\medskip

Now, we give a counterexample showing that this conjecture is false.

\begin{example}\label{conjecture2} We recall that    generalized
Dedekind domains (see, for instance, \cite{gabelli, popescu}) are
examples of H-domains (since   their prime ideals are divisorial,
\cite[Theorem 15]{gabelli}). Now, consider the domain $D := \SZ
+X\SQ[[X]]$. In \cite[Example 2]{gabelli} it is shown that $D$ is
generalized Dedekind. Let $I$ be the ideal of $D$ generated by the
set $\{\frac{1}{p}X \mid p \in \SZ \}$. It is easy to check that $I$
is locally principal. Moreover, in \cite{gabelli} it is also shown
that $I$ is not divisorial. Then $I$ is not finitely generated,
otherwise it would be invertible and so divisorial.
\end{example}

\begin{remark} \label{conjecture-Bazzoni} Conjecture 2 may be refuted also by using
the following argument.   R. Gilmer (\cite[Lemma 37.3]{gilmer}) has
shown that

\smallskip

\begin{lemma} \label{lemma:gilmer} If $D$ is a Pr\"ufer domain with the finite character (i.e.,
each nonzero element of $D$ is contained in finitely many maximal
ideals), then every locally principal ideal (i.e., faithfully flat
ideal)
 of $D$ is invertible. \end{lemma}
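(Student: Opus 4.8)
The plan is to reduce the statement to the fact that in a Pr\"ufer domain every nonzero finitely generated ideal is invertible; hence it suffices to prove that a locally principal (equivalently, by the equivalences recalled above, faithfully flat) ideal $I$ of a Pr\"ufer domain $D$ with the finite character is finitely generated. After multiplying $I$ by a suitable nonzero element of $D$ (which does not affect invertibility) we may assume $I \subseteq D$. First I would fix a nonzero element $a \in I$ and invoke the finite character: only finitely many maximal ideals $M_1, \dots, M_n$ of $D$ contain $a$.

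Next I would assemble a finite generating set. For any maximal ideal $M$ with $a \notin M$, the element $a$ is a unit in $D_M$, so the inclusion $a \in I$ forces $I D_M = D_M = a D_M$. For each of the finitely many $M_i$ containing $a$, local principality gives $I D_{M_i} = \alpha_i D_{M_i}$ for some $\alpha_i \in K$; since every element of $I D_{M_i}$ has the form $x/s$ with $x \in I$ and $s \notin M_i$, the generator may be taken inside $I$, say $I D_{M_i} = b_i D_{M_i}$ with $b_i \in I$. I then set $J := (a, b_1, \dots, b_n) \subseteq I$, a finitely generated subideal of $I$.

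Finally I would verify that $J$ and $I$ agree locally at every maximal ideal $M$: if $a \notin M$, then $a \in J$ is a unit in $D_M$, so $J D_M = D_M = I D_M$; if $M = M_i$, then $b_i \in J$ gives $I D_{M_i} = b_i D_{M_i} \subseteq J D_{M_i} \subseteq I D_{M_i}$, whence equality. Since a fractional ideal of a domain equals the intersection of its localizations at the maximal ideals, the equalities $J D_M = I D_M$ for all $M$ yield $J = I$; thus $I$ is finitely generated and therefore invertible, $D$ being Pr\"ufer. The crux of the argument — and the place where the finite character is indispensable — is exactly this passage from everywhere-local data to a single finite generating set: the finiteness of the set of maximal ideals containing the fixed $a$ is what keeps $J$ finitely generated, and the only technical care required is the choice of each local generator $b_i$ from within $I$ itself.
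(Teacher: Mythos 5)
Your proof is correct. Note, however, that the paper does not prove this lemma at all: it is quoted inside Remark \ref{conjecture-Bazzoni} as a known result, with a bare citation to \cite[Lemma 37.3]{gilmer}, so there is no in-paper argument to compare against. Your write-up is the standard proof of that result: reduce to an integral ideal, fix $0 \neq a \in I$, use the finite character to isolate the finitely many maximal ideals $M_1,\dots,M_n$ containing $a$, choose local generators $b_i \in I$ at those primes (the clearing of denominators to get $b_i$ inside $I$ is exactly the right technical care), and check that $J=(a,b_1,\dots,b_n)$ agrees with $I$ in every localization, whence $J=I$ by the identity $I=\bigcap_{M\in\max(D)} ID_M$. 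One small observation worth making explicit: the Pr\"ufer hypothesis enters only in the very last line. Your argument actually shows that in \emph{any} domain with the finite character a nonzero locally principal ideal is finitely generated, and since a finitely generated locally principal ideal is invertible in any domain, the conclusion already follows without invoking that all finitely generated ideals of a Pr\"ufer domain are invertible; this sharper form is precisely the content of Proposition \ref{ff-t-finite character} in the paper (with $t=d$).
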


\smallskip
In \cite[p.630]{bazzoni} S. Bazzoni   conjectured that:
\medskip

\textit{``Let $D$ be a Pr\"ufer domain. Then every locally principal
ideal
 of $D$ is invertible if and only
if $D$ has the finite character"}

\medskip
\nt and proved this conjecture for some particular Pr\"ufer domains
   (\cite[Theorem 4.3]{bazzoni}). Recently this
conjecture has been
 proven by W.C. Holland, J. Martinez, W.Wm. McGovern, M. Tesemma
(\cite{hmmt}) and, independently, by F. Halter-Koch (\cite{hk}).

\nt In Example~\ref{conjecture2} we have recalled  that generalized
Dedekind domains are H-domains. Now, if Conjecture 2 were true, a
generalized Dedekind domain $D$ would be a Pr\"ufer domain in which
each locally principal ideal is invertible. Hence $D$ would have the
finite character. But the domain $\SZ+X\SQ[[X]]$ considered in
Example \ref{conjecture2} is generalized Dedekind without  the
finite character (the element $X$ is contained in infinitely many
maximal ideals).
\end{remark}

 In \cite{GV} the authors have shown that to prove Conjecture 2
would be enough to get that each faithfully flat ideal in a H-domain
is divisorial. In Example \ref{conjecture2} we have seen that this
is not always true, but we have shown that (faithfully) flat ideals
are
  $t$-ideals (Theorem \ref{thm:tflat}). Now, recall that
H-domains are exactly the domains in which  the $t$-maximal ideals
are all divisorial. Note that if we strengthen this condition
considering domains in which  all the $t$-ideals are divisorial
(\emph{TV-domains}, Cf. \cite{hz}), then for this class of domains
Conjecture 2 is true, since, in this case, flat ideals, being
$t$-ideals, are divisorial. In fact, we prove something more:

\begin{proposition}\label{ff-t-finite character}
Let $D$ be a domain with the $t$-finite character (i.e., each proper
$t$-ideal is contained in finitely many $t$-maximal ideals). Then
each faithfully flat ideal in $D$ is invertible.
\end{proposition}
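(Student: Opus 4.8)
The plan is to reduce the statement to Proposition~\ref{prop:flat_inv} by proving that a faithfully flat ideal $I$ is $t$-finite; since such an $I$ is in particular flat, $t$-finiteness will immediately yield invertibility. First I would reduce to the case of a proper integral ideal: multiplying $I$ by a suitable nonzero $d \in D$ changes neither flatness nor faithful flatness (as $dI \cong I$ as $D$-modules), nor invertibility, nor the property of being a $t$-ideal (since $(dH)_t = d H_t$), so I may assume $I \subseteq D$ and $I \ne D$. Now recall that a faithfully flat ideal is flat and locally invertible; by Theorem~\ref{thm:tflat} flat ideals are $t$-ideals, so $I = I_t = I_w$, and since $I$ is principal at every localization $D_P$ at a $t$-maximal (prime) ideal $P$, it is $t$-locally principal.

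The core of the argument is the construction of a finitely generated subideal $J \subseteq I$ with $JD_M = ID_M$ at every $t$-maximal ideal $M$, and this is where the $t$-finite character enters twice. Since $I$ is a proper $t$-ideal, the hypothesis gives finitely many $t$-maximal ideals $M_1, \ldots, M_n$ containing $I$, and for each I would choose $a_i \in I$ with $a_i D_{M_i} = ID_{M_i}$ (possible because $ID_{M_i}$ is principal and generated by elements of $I$). Setting $J_0 := (a_1, \ldots, a_n) \subseteq I$, the $t$-finite character applied to the proper $t$-ideal $(J_0)_t \subseteq I$ shows that $J_0$ lies in only finitely many $t$-maximal ideals; those different from $M_1, \ldots, M_n$, say $N_1, \ldots, N_k$, cannot contain $I$ (as the $M_i$ are \emph{all} the $t$-maximal ideals over $I$), so I may pick $b_l \in I \setminus N_l$. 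I then set $J := (a_1, \ldots, a_n, b_1, \ldots, b_k) \subseteq I$.

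I would then verify that $JD_M = ID_M$ for every $t$-maximal ideal $M$: at each $M_i$ one has $ID_{M_i} = a_i D_{M_i} \subseteq JD_{M_i} \subseteq ID_{M_i}$; and any $t$-maximal ideal containing $J$ must contain $J_0$, hence be one of $M_1, \ldots, M_n, N_1, \ldots, N_k$, while $J \not\subseteq N_l$ by construction, so for every $t$-maximal $M \notin \{M_1, \ldots, M_n\}$ both $JD_M$ and $ID_M$ equal $D_M$. Since the $w$-operation is the spectral star operation determined by the localizations at the $t$-maximal ideals (that is, $H_w = \bigcap_M HD_M$), the equality $JD_M = ID_M$ for all such $M$ yields $J_w = I_w = I$; as $J \subseteq I$ forces $J_t \subseteq I_t = I$ while always $J_w \subseteq J_t$, one gets $I = J_w \subseteq J_t \subseteq I$, so $J_t = I$ and $I$ is $t$-finite. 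Finally, $I$ being flat and $t$-finite is invertible by Proposition~\ref{prop:flat_inv}.

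The main obstacle is the construction of $J$: one needs the finite character not only to control the $t$-maximal ideals over $I$, but also, crucially, to guarantee that after choosing the local generators $a_i$ the enlarged ideal $J$ is contained in no extraneous $t$-maximal ideal, so that $J$ and $I$ genuinely agree at every $t$-maximal localization. The passage from this local agreement to the global identity $J_t = I$ then relies on the locality of the $w$-operation together with the fact, already available here, that flat ideals are $w$-ideals.
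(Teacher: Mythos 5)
Your proof is correct and follows essentially the same route as the paper's: faithfully flat gives flat and ($t$-)locally principal, the $t$-finite character upgrades this to $t$-finiteness, and Proposition~\ref{prop:flat_inv} concludes. The only difference is that the paper simply asserts the middle step (``the $t$-finite character of $D$ implies that $I$ is $t$-finite''), whereas you supply the standard argument for it in full, via the auxiliary finitely generated ideal $J$ and the identification of $H_w$ with $\bigcap_{M \in \tmax(D)} HD_M$.
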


\begin{proof}
If $I \in \F(D)$ is faithfully flat, then $I$ is locally principal
and, in particular, $I$ is $t$-locally principal (i.e., $ID_P$ is
principal for each $P \in \tmax(D)$). The $t$-finite character of
$D$ implies that $I$ is $t$-finite. Then, by Proposition
\ref{prop:flat_inv}, $I$ is invertible.
\end{proof}

Since TV-domains have the $t$-finite character (\cite[Theorem
1.3]{hz}), we obtain the following:

\begin{corollary}\label{tv} Let $D$ be a TV-domain. Then each
faithfully flat ideal in $D$ is invertible.
\end{corollary}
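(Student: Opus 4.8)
The plan is to recognize that this corollary is an immediate specialization of Proposition \ref{ff-t-finite character}. That proposition already isolates the $t$-finite character as the sole hypothesis needed to force every faithfully flat ideal to be invertible, so the entire task reduces to verifying that a TV-domain satisfies that hypothesis. No new ideal-theoretic argument is required beyond this reduction.

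First I would recall that a TV-domain is, by definition, a domain in which every $t$-ideal is divisorial. The key structural input is the fact, established in \cite[Theorem 1.3]{hz}, that such domains necessarily possess the $t$-finite character, i.e.\ every proper $t$-ideal is contained in only finitely many $t$-maximal ideals. Taking this cited implication as given, the hypothesis of Proposition \ref{ff-t-finite character} is satisfied verbatim for $D$, and the conclusion that each faithfully flat ideal is invertible follows at once.

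For the benefit of the reader I would also make the logical chain inside Proposition \ref{ff-t-finite character} explicit, since it clarifies why the TV condition suffices: a faithfully flat ideal $I$ is locally principal, hence $t$-locally principal (that is, $ID_P$ is principal for each $P \in \tmax(D)$); the $t$-finite character then upgrades this to $t$-finiteness of $I$; and finally Proposition \ref{prop:flat_inv} converts $t$-finiteness plus flatness into invertibility. The TV hypothesis enters only through the step TV-domain $\Rightarrow$ $t$-finite character.

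There is no genuine obstacle here beyond invoking the cited result, so the proof is essentially a one-line deduction. If one wished to avoid citing \cite[Theorem 1.3]{hz}, the only nontrivial point would be to argue directly that divisoriality of all $t$-ideals yields the finite character on $t$-maximal ideals; but since that implication is precisely the content of the cited theorem, I would simply quote it and conclude.
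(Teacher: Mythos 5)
Your proof is correct and follows exactly the paper's own route: the paper derives this corollary by citing \cite[Theorem 1.3]{hz} for the fact that TV-domains have the $t$-finite character and then applying Proposition \ref{ff-t-finite character}. Your additional unpacking of the internal steps of that proposition is accurate but not needed.
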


\begin{remark} Given an integral domain $D$ consider the two
following conditions:

\begin{itemize}
\item[(a)] $D$ has the $t$-finite character;

\item[(b)] each  faithfully flat  ideal in $D$ is invertible.
\end{itemize}

 Proposition \ref{ff-t-finite character} proves that (a) $\Rightarrow$ (b) for any domain $D$.

  We notice that for Pr\"ufer domains   (b) $\Rightarrow$ (a) (in
  this case $t=d$ and (a) is the hypothesis of finite character on
  $D$). This is exactly the content of Bazzoni's conjecture.

Moreover, if $D$ is a Noetherian domain, it is well-known that
 each faithfully flat ideal in $D$ is invertible (see also Propositon \ref{prop:flat_inv}). A
Noetherian domain does not necessarily have the finite character,
but it  does have the $t$-finite character. So, also in this case we
have that (b) $\Rightarrow$ (a).

What we observed for these two relevant classes of domains (the
Pr\"ufer and the Noetherian ones) suggests the following question:

\begin{question}
If each faithfully flat ideal of $D$ is invertible, does $D$ have
the t-finite character?
\end{question}

So far, we are not able to answer to this questions but the
considerations above suggest to investigate in this direction and
try to generalize Bazzoni's conjecture to a class of domains larger than the one of Pr\"ufer domains.
\end{remark}

\section{Quasi-stable domains} \label{sec:quasi-stable}

We recall from the Introduction that a nonzero    ideal $I$ of $D$
is \textit{stable} if $I$ is projective in the endomorphism ring $(I
\colon I)$ and that $D$ is a \textit{stable domain} if each nonzero
ideal of $D$ is stable. Moreover, an integral domain $D$ is
\textit{finitely stable} if each nonzero finitely generated ideal of
$D$ is stable.

 Proposition \ref{ff-t-finite character}   suggests  the
following characterization of stable domains with the $t$-finite
character.

\begin{proposition}\label{t-finite character}
An integral domain $D$ with the $t$-finite character is stable if
and only if each nonzero ideal $I$ of $D$ is faithfully flat in $(I:I)$.
\end{proposition}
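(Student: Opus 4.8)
The plan is first to translate the statement into a question about invertibility. Recall from the Introduction that over an integral domain ``projective'' and ``invertible'' coincide for ideals; moreover $E := (I\colon I)$ is an overring of $D$ which is again a fractional ideal of $D$ (for $0 \neq b \in I$ and $x \in (I\colon I)$ one has $xb \in I$, so $Eb \subseteq I$ and $E \in \F(D)$), hence $E$ is itself an integral domain with quotient field $K$ and $I$ is a (fractional) ideal of $E$. Thus the assertion ``$D$ is stable'' is exactly ``every nonzero $I$ is invertible in $E = (I\colon I)$,'' and the proposition becomes the equivalence, for each $I$, between being invertible and being faithfully flat over $E$.

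The implication ($\Rightarrow$) is the easy half and does not use the $t$-finite character. If $D$ is stable then each $I$ is invertible in $E$; an invertible ideal is locally principal, hence flat and locally finitely generated over $E$, and by the characterization of faithfully flat ideals as the locally invertible ones (recalled in Section~\ref{sec:flat}) this says precisely that $I$ is faithfully flat over $E$.

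For ($\Leftarrow$) I would fix a nonzero $I$, set $E = (I\colon I)$, and aim to promote the hypothesis ``$I$ faithfully flat over $E$'' to ``$I$ invertible over $E$.'' The key observation is that Proposition~\ref{prop:flat_inv} is a statement about arbitrary integral domains, so it applies verbatim in $E$: it suffices to show that $I$ is $t$-finite \emph{as an ideal of $E$}, for then the flatness of $I$ over $E$ forces $I$ to be invertible over $E$, and an invertible ideal is projective, so $I$ is stable. Now faithful flatness over $E$ makes $I$ a $t$-locally principal ideal of $E$ (and, by Theorem~\ref{thm:tflat} applied in $E$, a $t$-ideal of $E$), so the situation is exactly the one treated in Proposition~\ref{ff-t-finite character}, but with $E$ in place of $D$: once $E$ is known to have the $t$-finite character, the argument of that proposition yields the $t$-finiteness of $I$ and hence its invertibility.

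The main obstacle is therefore to transfer the $t$-finite character from $D$ to the endomorphism ring $E = (I\colon I)$, and this is the one genuinely delicate point, since $E$ need not be module-finite over $D$. The approach I would take is to use that $E$ is sandwiched between $D$ and $a^{-1}D$ for a suitable $0 \neq a \in D$, and to study the contraction map $N \mapsto N \cap D$ from $\tmax(E)$ to $\spec(D)$: the aim is to show that every $t$-maximal ideal of $E$ contracts to a prime $t$-ideal of $D$, that these contractions sit inside only finitely many $t$-maximal ideals of $D$, and that the fibres of the contraction are finite, so that the $t$-finite character of $D$ bounds the number of $t$-maximal ideals of $E$ containing any fixed proper $t$-ideal. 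Establishing this comparison between the $t$-operation of $D$ and that of the overring $(I\colon I)$ is where the real work lies; once it is in place, the two directions close up through the results of Section~\ref{sec:flat}.
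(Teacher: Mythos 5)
Your outline of the forward implication and of the overall reduction for the converse (apply Proposition \ref{prop:flat_inv} inside $E=(I\colon I)$ after securing the $t$-finite character there, via the argument of Proposition \ref{ff-t-finite character}) matches the paper's strategy, but the one step you flag as ``where the real work lies'' is exactly the step you have not supplied, and the route you sketch for it is not viable as stated. Transferring the $t$-finite character from $D$ to an arbitrary overring $E$ with $(D\colon E)\neq 0$ by contracting $t$-maximal ideals runs into real obstructions: a $t$-maximal ideal of $E$ need not contract to a $t$-ideal of $D$ (the $t$-operations of $D$ and of an overring are in general incomparable unless $E$ is $t$-linked over $D$), and even when the contraction lands in finitely many $t$-maximal ideals of $D$ one still has to control how many $t$-maximal ideals of $E$ lie over each of them. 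So as written the proof has a genuine gap at its central point.

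The paper closes this gap by a different, and essentially unavoidable, preliminary reduction that your proposal never invokes: the hypothesis applied to \emph{finitely generated} ideals already forces each such ideal to be invertible in its endomorphism ring (finitely generated flat ideals are invertible), so $D$ is finitely stable. By \cite[Proposition 2.1]{rush} the integral closure of $D$ is then Pr\"ufer, and by \cite[Lemma 2.1 and Theorem 2.4]{t-linked} every maximal ideal of $D$ is a $t$-ideal; consequently the $t$-finite character of $D$ is the ordinary finite character. At that point \cite[Lemma 3.4]{olb} gives the finite character for \emph{all} overrings of $D$, in particular for each $(I\colon I)$ (which, being an overring of a finitely stable domain, is itself finitely stable, so its finite character is again its $t$-finite character), and Proposition \ref{ff-t-finite character} applied in $(I\colon I)$ finishes the argument exactly as you intended. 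If you want to repair your write-up, replace the contraction-of-primes program by this chain: finite stability $\Rightarrow$ Pr\"ufer integral closure $\Rightarrow$ maximal ideals are $t$-ideals $\Rightarrow$ $t$-finite character $=$ finite character $\Rightarrow$ finite character passes to overrings.
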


\begin{proof}
If $D$ is stable, then each nonzero ideal $I$ of $D$ is invertible
in $(I:I)$, and so $I$ is faithfully flat in $(I:I)$  (and this is
true even without assuming the $t$-finite character).

For the converse, first note that if each nonzero ideal $I$ of $D$
is faithfully flat in $(I:I)$ then, in particular, each finitely
generated ideal $I$ is invertible in $(I:I)$. Thus $D$ is finitely
stable. By
  \cite[Proposition 2.1]{rush}, finitely stable domains have Pr\"ufer
integral closure, whence  all maximal ideals of $D$ are $t$-ideals
by \cite[Lemma 2.1 and Theorem 2.4]{t-linked}. Thus, the $t$-finite
character on $D$ is, in fact, the finite character and, by
\cite[Lemma 3.4]{olb}, all overrings of $D$ have the finite
character.  By  hypothesis, if $I \in \F(D)$, $I$ is faithfully flat
in $(I:I)$ (which has the finite character). So $I$ is invertible by
Proposition \ref{ff-t-finite character} and $D$ is stable.
\end{proof}

\begin{remark} If $D$ does not have the $t$-finite character,
Proposition \ref{t-finite character} does not hold. In fact, take an
almost Dedekind domain $D$ which is not Dedekind (\cite[Example 42.6
 and Remark 42.7]{gilmer}). In this case $t=d$ ($D$ is Pr\"ufer) and $D$
does not have the $t$-finite character. Each ideal of $D$ is locally
principal and so it is faithfully flat in $D$. Moreover, $D$ is the
endomorphism ring of each of its ideals, since it is completely
integrally closed, but $D$ has, at least, a nonzero ideal which is
not invertible and so $D$ is not stable.

After considering the faithfully flat condition on ideals,  it seems
natural to investigate in which domains each nonzero ideal is flat
in its endomorphism ring and compare this new class of domains with
stable and finitely stable domains. \end{remark}

\begin{definition}\label{quasi-stable}
We say that a nonzero \textit{ideal} $I$ of a domain $D$ is
\emph{quasi-stable} if $I$ is flat as an ideal of $(I:I)$ and that a
\textit{domain} $D$ is \emph{quasi-stable} if  each nonzero ideal of
$D$ is quasi-stable.
\end{definition}

\begin{proposition}\label{prop:fs-fqs}
The following conditions are equivalent for an integral domain~$D$:

\begin{enumerate}
\item[(i)] $D$ is finitely stable.
\item[(ii)] Each nonzero finitely generated ideal of $D$ is quasi-stable.
\item[(iii)] For each nonzero finitely generated ideal $I$ of $D$, $I$ is a $t$-ideal of $(I:I)$ and $((I:I):I)$ is a
finitely generated
 ideal of $(I:I)$.
\end{enumerate}
\end{proposition}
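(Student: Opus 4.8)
The plan is to fix a nonzero finitely generated ideal $I$ and work inside its endomorphism ring $T := (I:I)$, observing that $I$ is again a nonzero finitely generated ideal of the domain $T$ (generated over $T$ by the same elements) and that its ring of multipliers over $T$ is still $(I:I) = T$. Since projective and invertible coincide for ideals of a domain, stability of $I$ means $I$ is invertible over $T$, i.e.\ $I(T:I) = T$, while quasi-stability means $I$ is flat over $T$.

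For (i) $\Leftrightarrow$ (ii) I would simply apply Proposition~\ref{prop:flat_inv} with the domain $T$ in place of $D$: the ideal $I$ is finitely generated, hence $t$-finite, over $T$, so $I$ is flat over $T$ if and only if it is invertible over $T$. Thus $I$ is quasi-stable exactly when it is stable, and ranging over all finitely generated $I$ gives the equivalence. For (ii) $\Rightarrow$ (iii), assuming $I$ is flat over $T$, Theorem~\ref{thm:tflat} applied over $T$ shows $I$ is a $t$-ideal of $T$, which is the first half of (iii); and $I$ flat and finitely generated is invertible over $T$, so $(T:I) = I^{-1}$ is invertible, hence finitely generated, over $T$ (from a relation $1 = \sum a_i b_i$ with $a_i \in I$, $b_i \in (T:I)$), which is the second half. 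Thus (i)/(ii) $\Rightarrow$ (iii) is immediate.

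The substantial direction is (iii) $\Rightarrow$ (i). First I would note that, because $I$ is finitely generated, $I_t = I_v$ over $T$, so ``$I$ is a $t$-ideal of $T$'' means precisely that $I$ is divisorial over $T$. Writing $B := I(T:I) \subseteq T$ and using the identity $(T:XY) = ((T:Y):X)$ together with $I = I_v$, I compute $(T:B) = ((T:(T:I)):I) = (I_v:I) = (I:I) = T$, so that $B_v = (T:(T:B)) = T$. Since $(T:I)$ is finitely generated by (iii) and $I$ is finitely generated, $B$ is a finitely generated ideal of $T$; for such an ideal $B_t = B_v$, whence $B_t = T$. In other words, (iii) forces every finitely generated $I$ to be $t$-invertible over its endomorphism ring.

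The remaining, and hardest, step is to upgrade $B_t = T$ to $B = T$ (equivalently, $I$ invertible, i.e.\ stable). This is \emph{false} ideal-by-ideal — a height-one prime of a non-factorial normal local domain is divisorial and has finitely generated inverse over its endomorphism ring, yet is not invertible — so the \emph{global} hypothesis of (iii) must enter here, and this is where I expect the real difficulty. The gap closes as soon as every maximal ideal of $T$ is a $t$-ideal, since then a finitely generated ideal $B$ with $B_t = T$ lies in no maximal ideal and hence equals $T$. My plan for this step is to exploit that (iii), holding for \emph{all} finitely generated ideals, forces the integral closures of the relevant overrings $T = (I:I)$ to be Pr\"ufer (a finite-stability phenomenon in the spirit of \cite{rush}), so that all their maximal ideals are $t$-ideals (as in \cite{t-linked}); the delicate point is to derive this Pr\"ufer/$t$-maximal property directly from (iii) rather than from the conclusion, in order to avoid circularity. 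Once every maximal ideal of $T$ is a $t$-ideal, the $t$-invertibility of the finitely generated ideal $I$ becomes genuine invertibility, yielding stability of $I$ and hence (i).
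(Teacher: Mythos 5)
Your handling of (i)$\Leftrightarrow$(ii) and (ii)$\Rightarrow$(iii) is correct and is exactly the paper's route: both rest on the fact that a finitely generated (hence $t$-finite) flat ideal of the domain $T=(I:I)$ is invertible (Proposition~\ref{prop:flat_inv} applied over $T$), plus Theorem~\ref{thm:tflat} for the $t$-ideal part. Your preliminary reduction in (iii)$\Rightarrow$(i) is also sound: since $I$ is finitely generated, ``$t$-ideal of $T$'' means ``divisorial in $T$'', and your computation $(T:I(T:I))=((T:(T:I)):I)=(I_v:I)=(I:I)=T$ together with the finite generation of $(T:I)$ correctly yields that $B:=I(T:I)$ is a finitely generated ideal of $T$ with $B_t=B_v=T$. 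Your observation that this cannot be pushed to $B=T$ ideal-by-ideal, so that the global hypothesis must intervene, is also accurate.

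The genuine gap is that you stop exactly at the step that constitutes the entire content of (iii)$\Rightarrow$(i): passing from $B_t=T$ to $B=T$. You correctly identify that it would suffice to know that every maximal ideal of $T$ is a $t$-ideal, and you propose to obtain this by showing that (iii) forces $\overline{T}$ to be Pr\"ufer ``in the spirit of \cite{rush}'' --- but that route runs through finite stability of $T$, which is (up to replacing $D$ by $T$) the conclusion you are trying to prove; you flag this circularity yourself and do not resolve it. As it stands, the implication (iii)$\Rightarrow$(i) is therefore not proved. The paper does not resolve it by your proposed route either: it closes this direction by transplanting, step for step, Olberding's argument for \cite[Theorem 3.5, (ii)$\Rightarrow$(i)]{olb} (the characterization of stable domains by divisoriality of each ideal in its endomorphism ring), where the finite generation of $((I:I):I)$ assumed in (iii) is precisely what makes that argument go through in the finitely generated setting. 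To complete your write-up you would need either to reproduce that argument or to find an independent, non-circular proof that under (iii) a finitely generated trace ideal of $(I:I)$ with trivial inverse must be the whole ring.
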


\begin{proof}
(i)$\Leftrightarrow$(ii) and (ii)$\Rightarrow$(iii)  are a straightforward consequence of the
fact that finitely generated flat ideals are invertible.

 (iii)$\Rightarrow$(i) follows by applying exactly the same argument used in the proof of
\cite[Theorem 3.5, (ii)$\Rightarrow$(i)]{olb}.
\end{proof}

So, in particular, the Noetherian quasi-stable domains are exactly
the Noetherian stable domains (Cf. \cite[Theorem 11]{goeters}).

Note that since stable ideals are quasi-stable (invertible ideals
are flat), stable domains are quasi-stable. Moreover, it is an easy
consequence of Proposition \ref{prop:fs-fqs} that quasi-stable
domains are finitely stable.

In Example \ref{I is t-ideal in End(I)}, we will show that there
exists an integral domain $R$  that  satisfies condition (iii) of
Proposition \ref{prop:fs-fqs}, but which is not quasi-stable. Thus
we pose the following question:

\medskip
\begin{question} Are the finitely stable domains
the domains in which each ideal (or each finitely generated ideal)
is a $t$-ideal in $(I:I)$? \end{question}

\medskip

This question is also suggested by the following fact. Olberding in
\cite[Theorem 3.5]{olb} has shown that a domain $D$ is stable if and
only if each nonzero ideal $I$ of $D$ is divisorial in its
endomorphism ring $(I \colon I)$.
 Moreover, the $t$-operation is the finite-type operation associated to the $v$-operation and the finitely stable domains are the finite-type version of stable domains. Thus a positive answer to the question above would give a finite-type interpretation of Olberding's result.

\begin{examples} \begin{enumerate}
\item  \emph{A quasi-stable domain that is not stable.}

Each Pr\"ufer domain is quasi-stable, because each ideal of a
Pr\"ufer domain is flat and overrings of Pr\"ufer domains are
Pr\"ufer. Since stable domains have the finite character
(\cite[Theorem 3.3]{olb}), it is enough to take a Pr\"ufer domain
without the finite character (e.g., an almost Dedekind domain which
is not Dedekind) to get an example of a quasi-stable domain which is
not stable.

Note also that  the finite character on $D$ is not sufficient to get
that a quasi-stable domain is stable. Again, a Pr\"ufer domain of
finite character which is not strongly discrete (i.e., it has at
least a prime ideal that is idempotent) is quasi-stable but not
stable (\cite[Theorem 4.6]{olb2}).

\medskip

\item \emph{A quasi-stable non Pr\"ufer domain that is not stable.}

Consider a pseudo-valuation  domain $D$ that is not a valuation
domain with maximal ideal $M$ and associated valuation domain
$M^{-1}=(M:M)=V$ and assume that $V$ is $2$-generated as a
$D$-module. In this case $v=t=d$ on $D$ (\cite[Corollary 1.8]{pvd2}
and \cite[Proposition 4.3]{hz}). So, each ideal of $D$ is principal
or it is a common ideal of $D$ and $V$ (\cite[Proposition
2.14]{PVD1}). If $I$ is principal in $D$, then $(I \colon I) = D$
and $I$ is flat in $D$. So $I$ is quasi-stable. If $I$ is a common
ideal of $D$ and $V$, then $(I:I) \supseteq V$ is a valuation domain
and so $I$ is flat in $(I:I)$. Thus $D$ is quasi-stable.

 If we take $M$ non-principal in $V$,
then $M$ is not invertible in $(M:M)=V$ and $D$ is not stable.
\end{enumerate}
\end{examples}

As we have seen, it is easy to find examples of quasi-stable domains
which are not stable, even in the case of integrally closed domains
with finite character. On the contrary, it seems that quasi-stable
domains are very close to finitely stable domains. We have already
mentioned that these two classes of domains (quasi-stable and
finitely stable) do coincide in the Noetherian case. The next result
shows that they coincide also in  the other classical case  of
integrally closed domains.

\begin{proposition}
Let $D$ be an integrally closed domain. The following conditions are equivalent:
\begin{enumerate}
\item[(i)] $D$ is a quasi-stable domain.

\item[(ii)] $D$ is a finitely stable domain.

\item[(iii)] $D$ is a Pr\"ufer domain.
\end{enumerate}
\end{proposition}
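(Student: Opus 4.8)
The plan is to prove the cycle (iii) $\Rightarrow$ (i) $\Rightarrow$ (ii) $\Rightarrow$ (iii), and to observe that the hypothesis that $D$ is integrally closed is needed only in the last implication; the first two hold for an arbitrary domain and merely repackage facts already established.

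First I would dispose of (iii) $\Rightarrow$ (i). Suppose $D$ is a Pr\"ufer domain and let $I$ be a nonzero ideal of $D$. Its endomorphism ring $(I\colon I)$ is an overring of $D$, and overrings of Pr\"ufer domains are again Pr\"ufer. Since in a Pr\"ufer domain every ideal is flat (by \cite[Theorem 25.2]{gilmer} together with Proposition \ref{char:flatness}), the ideal $I$ is flat as an ideal of the Pr\"ufer domain $(I\colon I)$; that is, $I$ is quasi-stable. As $I$ was arbitrary, $D$ is quasi-stable. This is exactly the reasoning already recorded for the first of the preceding Examples. Next, (i) $\Rightarrow$ (ii) is the general implication \emph{quasi-stable} $\Rightarrow$ \emph{finitely stable}, which needs no hypothesis on $D$: if every nonzero ideal of $D$ is flat in its endomorphism ring, then in particular every nonzero finitely generated ideal is quasi-stable, so condition (ii) of Proposition \ref{prop:fs-fqs} holds and hence $D$ is finitely stable.

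The substantive step is (ii) $\Rightarrow$ (iii), and this is where I expect the main difficulty to concentrate, since it cannot be reached by the elementary flatness manipulations used above. My plan is to invoke the structural result of Rush \cite[Proposition 2.1]{rush}, namely that a finitely stable domain has Pr\"ufer integral closure --- the same input used in the proof of Proposition \ref{t-finite character}. Granting this, if $D$ is finitely stable then its integral closure is Pr\"ufer; but $D$ is integrally closed, so $D$ coincides with its integral closure and is therefore itself Pr\"ufer. This closes the cycle and yields the equivalence of (i), (ii) and (iii).

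The only genuine obstacle, then, is the passage from finite stability to a global property of the domain, and I would resolve it by citing Rush's theorem rather than reproving it; everything else is bookkeeping with results internal to the paper. It is worth emphasizing in the write-up that (ii) $\Rightarrow$ (iii) is precisely where integral closedness enters, so that the proposition is best read as saying that, within the integrally closed world, the a priori strictly intermediate notion of quasi-stability collapses onto the Pr\"ufer condition.
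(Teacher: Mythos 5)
Your proposal is correct and follows essentially the same route as the paper: the implication (i)$\Rightarrow$(ii) via Proposition \ref{prop:fs-fqs}, the implication (ii)$\Rightarrow$(iii) via Rush's result that finitely stable domains have Pr\"ufer integral closure (so an integrally closed finitely stable domain is Pr\"ufer), and (iii)$\Rightarrow$(i) from the fact that every ideal of a Pr\"ufer domain is flat and overrings of Pr\"ufer domains are Pr\"ufer. Your write-up merely spells out the last implication, which the paper dismisses as obvious, so there is nothing further to add.
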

\begin{proof}
(i)$\Rightarrow$(ii) follows from  Proposition \ref{prop:fs-fqs}.

(ii) $\Rightarrow$(iii) follows from \cite[Proposition 2.1]{rush}.

(iii) $\Rightarrow$(i) is obvious.
\end{proof}

Despite of the previous examples, in general finitely stable domains
are not necessarily quasi-stable. The follow-up of this section is
devoted exclusively to the construction of an example of a finitely
stable domain which is not quasi-stable.

\begin{example}\label{finitely stable not flat stable}
\textbf{Example of a domain that is finitely stable but not
quasi-stable.}

Let $\SF_2$  be the field with $2$ elements and $t$ be an
indeterminate over $\SF_2$. Let $(V,M)$ be a DVR with residue field
$\SF_2(t)$: for instance, take  $(V,M) := (\SF_2(t)[[X]],
X\SF_2(t)[[X]]$), and consider the $2$-degree field extension
$\SF_2(t^2) \subsetneq \SF_2(t)$. Let $A := \SF_2[t^2]_{Q}$, where
$Q$ is a nonzero prime ideal of $\SF_2[t^2]$ which does not contain
$t^2$. Then $A$ is a DVR with quotient field $\SF_2(t^2)$. Consider
the following pullback diagram:

$$
\CD
R:=\varphi^{-1}(A) @>>>  A = R/M\\
@VVV @VVV \\
D:=\varphi^{-1}(\SF_2(t^2)) @>>> \SF_2(t^2) = D/M \\
@VVV @VVV \\
V @>{\varphi}>> \SF_2(t) = V/M
\endCD
$$

\medskip

\nt where the horizontal arrows are projections and the vertical
arrows are injections. Now, $D$ is a Noetherian, pseudo-valuation
domain and since $[\SF_2(t):\SF_2(t^2)]=2$, $D$ is totally
divisorial by \cite[Corollary 1.8]{pvd2} and \cite[Proposition
4.3]{hz} (i.e., each ideal of $D$ is divisorial and the same holds
for each overring of $D$). Then $D$ is stable by \cite[Theorem
2.5]{olb}, whence it is finitely stable.

Let $\overline{R}$ denote the integral closure of $R$. Since $R
\subseteq \overline{R} \subset V$, $M$ is a common ideal of $R, \r$
and $V$. Then $A \subseteq \overline{R}/M \subset  \SF_2(t)$ and
$\overline{R}/M$ is the integral closure of $A$ in $\SF_2(t)$
(\cite[Lemme 2]{cahen}), that we denote, as usual, by
$\overline{A}^{\SF_2(t)}$. It  follows immediately that $R \neq
\overline{R}$ because $t \in \overline{A}^{\SF_2(t)} \backslash A$
(whence, the quotient field of $\overline{A}^{\SF_2(t)}$ is
$\SF_2(t)$). It is well-known that $\overline{A}^{\SF_2(t)}$ is the
intersection of the valuation domains   extending $A$ in $\SF_2(t)$
(\cite[Theorem 20.1]{gilmer}) and, by \cite[Corollary 20.3]{gilmer},
the number of these extensions is, at most, the separable degree of
the field extension $\SF_2(t^2) \subset \SF_2(t)$, which is 1.
Hence, $\overline{A}^{\SF_2(t)}$ is simply a DVR (\cite[Theorem
19.16 (d)]{gilmer}).
Thus, $\overline{R} = \varphi^{-1}(\overline{A}^{\SF_2(t)})$ is a
two-dimensional valuation domain in which $M$ is the height-one
prime ideal (\cite[Theorem 2.4]{fontana}). Moreover, the maximal
ideal of $\r$ is principal since $\r/M$ is a DVR, and $\r _M = V$,
which is a DVR, whence the nonzero prime ideals of $\r$ are not
idempotent and $\r$ is totally divisorial (\cite[Proposition
7.6]{bs}).

By \cite[Proposition 3.6]{O} $R$ is finitely stable with principal
maximal ideal $N$. By general properties of pullback constructions,
$R$ is 2-dimensional  with ordered spectrum $(0) \subset M \subset
N$, and $R_M = D$. Since $D$ is 1-dimensional and $R$ is
2-dimensional, $\overline{R}$ does not contain $D$. Moreover, $D$
does not contain $\overline{R}$ because $D$ is not Pr\"ufer and $\r$
does. So $\overline{R}$ and $D$ are not comparable.

\medskip
\textbf{Claim.} Each ring   between $R$ and $V$ is comparable with
$D$ or $\overline{R}$. First notice that $M$ is a common ideal of
all rings  between $R$ and $V$. Let $B$ be such a ring and suppose
that $B$ is not comparable with $D$. Then $ B/M \not\subseteq
\SF_2(t^2)$ (since $D = \varphi^{-1}(\SF_2(t^2))$). But $A \subset
B/M$ (because $R \subset B$), so $\overline{A}^{\SF_2(t)} \subseteq
\overline{B/M}^{\SF_2(t)}$. As $\overline{A}^{\SF_2(t)}$ being a
DVR, it follows that $\overline{B/M}^{\SF_2(t)} =
\overline{A}^{\SF_2(t)}$ or $\overline{B/M}^{\SF_2(t)} = \SF_2(t)$.
In the first case, we have that $B/M \subseteq
\overline{A}^{\SF_2(t)}$ and so $B \subseteq \r$ (recall that $\r =
\varphi^{-1}(\overline{A}^{\SF_2(t)})$). The second case occurs if
and only if $B/M = \SF_2(t)$ and so $B=V$, which contains $\r$.

\medskip
By \cite[Theorem 4.11]{olb},   $R$ is not stable because $R_M = D$
is not a   valuation domain. Hence there exists a nonzero ideal
 in $R$ which is not divisorial in $(I \colon I)$
(\cite[Theorem 3.5]{olb}). Our aim is to show that this specific
ideal $I$ is not flat in $\ii$ and so $R$ is not quasi-stable.

If $I$ is finitely generated, then $I$ is stable since $R$ is
finitely stable and so $I$ is divisorial in $(I \colon I)$.

Then we can suppose that $I$ is not finitely generated and we
distinguish the following cases:
\begin{itemize}
\item[(a)] $\ii = R$;

\smallskip

\item[(b)] $\ii \neq R$  and $\ii$ is comparable with $D$;

\smallskip
\item[(c)] $\ii \neq R$  and $\ii$ is comparable with $\r$.
\end{itemize}

\medskip

(a) If $\ii = R$ and $I$ is flat in $R$, then $I$ is principal or $I
= IN$ by \cite[Lemma 2.1]{flat1}. We are supposing that $I$ is not
finitely generated, so $I=IN$. But $N= \pi R$ is principal and
$I=I\pi$ implies that $\pi, \pi^{-1} \in \ii = R$, which is
impossible. So in this case $I$ is not  flat in $\ii$ and $R$ is not
quasi-stable.

\smallskip
(b) If $\ii \neq R$  and $\ii$ is comparable with $D$, then $D
\subseteq \ii$ because  between $R$ and $D$ there are no  domains,
since there are no  domains between $A$ and $\SF_2(t^2)$ (because
$A$ is a DVR). But $D$ is totally divisorial, whence $I$ would be
divisorial in $\ii$ against the assumption. Thus, this case cannot
occur.

\smallskip
(c) If $\ii \neq R$  and $\ii$ is comparable with $\r$, then $\r
\subseteq \ii$ or $\ii \subsetneq \r$. In the first case, since $\r$
is totally divisorial,  $I$ would be divisorial in $\ii$, against
the assumption. So, we can assume that $\ii \subsetneq \r$. Then $A
\subsetneq \ii/M \subsetneq \overline{A}^{\SF_2(t)}$. So $\ii/M$ is
local (since its integral closure is $\overline{A}^{\SF_2(t)}$),  it
is Noetherian (by Krull-Akizuki Theorem) and it is not a PID. In
fact,   $\ii/M$ is not integrally closed (since it is strictly in
between $A$ and $\overline{A}^{\SF_2(t)}$). It follows that $\ii$ is
two-dimensional, with prime spectrum $(0) \subsetneq M \subsetneq
\m$ and $\m$ is   not principal (since $\m/M$ is not principal). If
$I$ is flat in $\ii$, then $I$ is principal or $I\m=I$ (again by
\cite[Lemma 2.1]{flat1}). Since $I$ is supposed to be not divisorial
in $\ii$, we have that $I\m=I$. Thus, $(\m \colon \m) \subseteq (I\m
\colon I\m) = \ii$, and so $(\m \colon \m) = \ii$. But $\m$ is not
principal and $\m^2 \neq \m$, since $\m/M$ is not idempotent, as
being $\m/M$ finitely generated. Then $\m$ is not flat in $\ii = (\m
\colon \m)$. We finally notice that $\ii$ is an overring of $R$,
which is finitely stable, whence $\ii$ is finitely stable. Thus, in
this case,$\ii$ is an example of finitely stable domain, which is
not quasi-stable.

We remark  that, from a result that we will prove in the next
section (Proposition~\ref{nonzero conductor}), we also have that
$\ii$ non quasi-stable implies that $R$ is not quasi-stable too.

\end{example}

\begin{example}\label{I is t-ideal in End(I)}
Consider the domain $R$ constructed in the example above. We have
seen that $R$ is finitely stable but  not quasi-stable. We now show
that each nonzero ideal $I$ of $R$ is a $t$-ideal in $(I \colon I)$.

Without loss of generality we can consider only integral ideals.

By construction, each integral ideal $I$ of $R$ is comparable with
$M$.

Suppose that $M \subsetneq I$, then $I = \pi^sR$ is principal, thus
it is a $t$-ideal (recall that the maximal ideal of $R$ is $N = \pi
R$ and $R/M$ is a DVR).

Conversely, let $I \subseteq M$. We consider two sub-cases:

\begin{enumerate}

\item[(a)] The domain $(I \colon I)$ is comparable with $D$.

\nt If $D \subseteq (I \colon I)$, then $(I \colon I)$ is a
divisorial domain (since $D$ is totally divisorial) and so each
ideal of $(I \colon I)$ is a $t$-ideal.

\nt If $R \subseteq (I \colon I) \subsetneq D$, then $(I \colon
I)=R$ and $I$ is $M$-primary in $R$. By \cite[Proposition 4.8]{AM},
$IR_M \cap R = I$. But $R_M=D$, $ID$ is a $t$-ideal in $D$, so $I$
is a $t$-ideal in $R$.

\item[(b)] The domain $(I \colon I)$ is comparable with $\r$.

\nt If $\r \subseteq (I \colon I)$, then $(I \colon I)$ is a
Pr\"ufer domain and so each ideal is a $t$-ideal.

\nt If $R \subsetneq (I \colon I) \subsetneq \r$, then    the
quotient field of $(I \colon I)/M$ is $\SZ_2(t)$. Then $(I \colon
I)_M=V$, $I$ is $M$-primary in $(I \colon I)$, $IV$ is a $t$-ideal
and so $I$ is a $t$-ideal by the same argument used above.

\end{enumerate}
\end{example}

\section{Overrings of quasi-stable domains} \label{sec:overrings}

It is known that overrings of stable domains are stable
 and overrings of finitely
stable domains are finitely stable (\cite[Theorem 5.1 and Lemma
2.4]{olb}). In this section we study the quasi-stability for
overrings of quasi-stable domains. We are able to prove that
overrings of quasi-stable domains are still quasi-stable for some
relevant classes of overrings (a general result is given in
Corollary~\ref{overrings}).

 The first result of this section is a generalization of the flatness criterion
 for ideals in integral domains  recalled in Proposition \ref{char:flatness}.

\begin{proposition}\label{char:flatness_modules}
Let $D$ be an integral domain and $I$ be a nonzero ideal of $D$.
Then $I$ is flat over $D$ if and only if $I(A \cap B) = IA \cap IB$,
for all $A,B$   $D$-submodules of $K$.
\end{proposition}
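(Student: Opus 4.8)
The plan is to prove Proposition \ref{char:flatness_modules}, which generalizes Proposition \ref{char:flatness} from fractional ideals $A,B \in \F(D)$ to arbitrary nonzero $D$-submodules $A,B$ of $K$. One direction is trivial: if $I(A\cap B) = IA \cap IB$ holds for \emph{all} $D$-submodules of $K$, then in particular it holds for all $A,B \in \F(D)$, so by Proposition \ref{char:flatness} the ideal $I$ is flat. The substance is the forward direction, so I would assume $I$ flat and establish the identity for arbitrary submodules.

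\medskip

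My first move is to recall that the inclusion $I(A\cap B) \subseteq IA \cap IB$ is automatic (this is the elementary fact $C(A\cap B)\subseteq CA\cap CB$ already noted in the excerpt after Proposition \ref{char:flatness}), so only the reverse inclusion $IA \cap IB \subseteq I(A\cap B)$ needs work. The key idea is a \emph{direct-limit} argument: an arbitrary nonzero $D$-submodule $A$ of $K$ is the directed union of its finitely generated $D$-submodules, each of which lies in $\F(D)$. I would write $A = \varinjlim A_\lambda$ and $B = \varinjlim B_\mu$ as directed unions of finitely generated submodules $A_\lambda, B_\mu \in \F(D)$. Since these are all contained in the field $K$, intersections and products behave well under the unions: $A\cap B = \bigcup_{\lambda,\mu}(A_\lambda \cap B_\mu)$, and likewise $IA = \bigcup_\lambda IA_\lambda$, $IB = \bigcup_\mu IB_\mu$.

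\medskip

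Now take any $z \in IA \cap IB$. Because $z$ lies in the directed unions $IA$ and $IB$, and the families are directed, there exist indices $\lambda, \mu$ with $z \in IA_\lambda \cap IB_\mu$; enlarging to a common refinement I may take $A_\lambda$ and $B_\mu$ comparable in the directed system or simply work with the two fixed finitely generated members $A_\lambda, B_\mu \in \F(D)$. Applying the finitely generated case, Proposition \ref{char:flatness}, to these two genuine fractional ideals gives $IA_\lambda \cap IB_\mu = I(A_\lambda \cap B_\mu)$, so $z \in I(A_\lambda \cap B_\mu) \subseteq I(A\cap B)$. This yields the reverse inclusion and completes the proof.

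\medskip

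The step I expect to require the most care is the bookkeeping of the directed unions: verifying that $A\cap B$ really is the union of the $A_\lambda \cap B_\mu$ over the product directed set, and that an element of $IA\cap IB$ can be captured inside a single pair $(IA_\lambda, IB_\mu)$ simultaneously. This is where flatness enters only through the already-proved finite case, so the main obstacle is purely the commutation of finite intersections and products with directed unions inside $K$ — routine once one is careful that everything takes place in a field, where a product $IA_\lambda$ is again a finitely generated (hence fractional) module and the directedness makes the unions compatible. No deeper use of flatness beyond Proposition \ref{char:flatness} should be needed.
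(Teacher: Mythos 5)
Your proof is correct, but it takes a genuinely different route from the paper's. You reduce the statement for arbitrary $D$-submodules $A,B$ of $K$ to the fractional-ideal case of Proposition \ref{char:flatness} by a direct-limit argument: every element of $IA\cap IB$ already lies in $IA_0\cap IB_0$ for suitable finitely generated (hence fractional, nonzero once the element is nonzero) submodules $A_0\subseteq A$, $B_0\subseteq B$, and then $IA_0\cap IB_0=I(A_0\cap B_0)\subseteq I(A\cap B)$. This is sound --- the only points needing a word are that a finitely generated $D$-submodule of $K$ is a fractional ideal, that $A_0\cap B_0$ is automatically nonzero for nonzero fractional ideals of a domain (so Proposition \ref{char:flatness} applies verbatim), and the trivial case $z=0$; your remark about ``comparability in the directed system'' is unnecessary, since you only ever need the two fixed members $A_0,B_0$. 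The paper instead argues module-theoretically: it invokes the fact that for flat $I$ one has $I\otimes_D(A\cap B)=(I\otimes_D A)\cap(I\otimes_D B)$ (Matsumura, Theorem 7.4) and then shows that the natural surjection $I\otimes_D N\twoheadrightarrow IN$ is injective for every $D$-submodule $N$ of $K$, by tensoring $0\to N\to K$ with the flat module $I$. Your approach is more elementary and self-contained given Proposition \ref{char:flatness}, and it makes transparent that the general identity is a formal consequence of the finitely generated case; the paper's approach is more conceptual and yields the identification $I\otimes_D N\cong IN$ as a byproduct, which is of independent interest.
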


\begin{proof}
The ``if" part is already shown in Proposition \ref{char:flatness}
since ideals are, in particular, $D$-submodules of $K$.

So we will prove  the ``only if" part. It is well-known
(\cite[Theorem 7.4]{matsumura}) that if $I$ is a flat $D$-module and
$A,B$ are $D$-submodules of $K$, then $I\otimes_D(A \cap B) = (I
\otimes_D A) \cap (I \otimes_D B)$. So it is enough to show that $I
\otimes_D N \cong IN$ for each $D$-submodule $N$ of $K$.

Consider the following surjective homomorphism of $D$-modules:
$$\varphi: I \otimes_D N \twoheadrightarrow IN, \quad i \otimes_D n
\mapsto in.$$

We show that $\varphi$ is injective, so obtaining that $I \otimes_D
N \cong IN$. Consider the exact sequence:
$$0 \rightarrow N \rightarrow K.$$

\nt For the $D$-flatness of $I$, the sequence $0 \rightarrow I
\otimes_D N \rightarrow I \otimes_D K$ is exact.

Suppose that $\varphi(\sum_{j=1}^si_j \otimes_D n_j) = \sum_{j=1}^si_j
n_j = 0$. Then
$$0 = \sum_{j=1}^si_jn_j \otimes_D 1_D = \sum_{j=1}^si_j
\otimes_D n_j   \in I \otimes_D K.$$
Thus $\sum_{j=1}^si_j \otimes_D n_j
= 0 \in I \otimes_D N$ for the exactness of the sequence above.

 This completes the proof.
\end{proof}

\begin{proposition}\label{flatness-overring}
Let $D$ be an integral domain and $I$ be a nonzero ideal of $D$.
  Let $T$ be an overring of $D$. If $I$ is a flat ideal of   $D$
  then $IT$ is a flat ideal of $T$.

\end{proposition}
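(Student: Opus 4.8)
The plan is to verify the module flatness criterion of Proposition \ref{char:flatness_modules} directly for the ideal $IT$ of the domain $T$. First I would check that $IT$ is indeed a nonzero fractional ideal of $T$: it is nonzero since $I \neq (0)$, and if $d \in D$ is a nonzero denominator for $I$ (so $dI \subseteq D$), then $d(IT) = (dI)T \subseteq DT = T$. Thus Proposition \ref{char:flatness_modules}, applied with $T$ in the role of the base domain, says that $IT$ is flat over $T$ precisely when $(IT)(A \cap B) = (IT)A \cap (IT)B$ for all $A, B$ $T$-submodules of $K$.

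The key reduction is that every $T$-submodule of $K$ is in particular a $D$-submodule of $K$, because $D \subseteq T$. So let $A$ and $B$ be $T$-submodules of $K$. Since $A$ and $B$ absorb multiplication by $T$, we have $TA = A$ and $TB = B$, and, as $A \cap B$ is again a $T$-submodule, $T(A \cap B) = A \cap B$. Consequently $(IT)A = I(TA) = IA$, $(IT)B = IB$, and $(IT)(A \cap B) = I(A \cap B)$.

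Now I would invoke the hypothesis that $I$ is flat over $D$. Viewing $A$ and $B$ as $D$-submodules of $K$, Proposition \ref{char:flatness_modules} (this time with base domain $D$) yields $I(A \cap B) = IA \cap IB$. Combining this with the identifications from the previous paragraph gives $(IT)(A \cap B) = I(A \cap B) = IA \cap IB = (IT)A \cap (IT)B$, which is exactly the criterion needed to conclude that $IT$ is flat over $T$.

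I expect no serious obstacle: the whole argument hinges on the single observation that $T$-submodules of $K$ are $D$-submodules of $K$ fixed by multiplication by $T$, which is precisely why upgrading the flatness criterion from fractional ideals (Proposition \ref{char:flatness}) to arbitrary submodules of $K$ (Proposition \ref{char:flatness_modules}) in the preceding step pays off here; the ideal-only version would not immediately apply to $D$-submodules of the form $A$ arising as $T$-submodules. The only points requiring a line of care are the verification that $IT$ is a genuine nonzero fractional ideal of $T$ and the bookkeeping that the module criterion is being applied over the correct base ring in each of its two uses.
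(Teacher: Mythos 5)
Your argument is correct and is essentially the paper's own proof: the paper also reduces to the observation that $T$-submodules of $K$ are $D$-submodules of $K$ and then applies Proposition \ref{char:flatness_modules}, merely stating this in one line where you have spelled out the bookkeeping ($TA=A$, $(IT)A=IA$, and the two applications of the criterion over the two base rings). No issues.
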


\begin{proof}
It is enough to observe that the $T$-submodules of $K$ are also
$D$-submodules of $K$ and apply Proposition
\ref{char:flatness_modules}.
\end{proof}

\begin{corollary}\label{prop:flat-conductor}
Let $D$ be an integral domain and $I$ be a nonzero ideal of $D$.
\begin{enumerate}

\item[(a)] If $I$ is flat, then $I$ is quasi-stable.

\item[(b)] If $I$ is a flat ideal of $D$, then $I$ is a $t$-ideal of $(I:I)$.
\end{enumerate}
\end{corollary}

\begin{proof}

(a) It is immediate from Proposition \ref{flatness-overring}, since
$(I:I)$ is an overring of $D$ and $I=I(I \colon I)$.

(b) It follows from (a) and Theorem \ref{thm:tflat}.
\end{proof}


We recall the following result due to D. Rush (\cite[Proposition 2.1]{rush}).

\begin{proposition}
Let $D$ be a  finitely stable domain. Then the integral closure
$\overline{D}$ of $D$ is  a Pr\"ufer domain.
\end{proposition}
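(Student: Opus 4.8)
The plan is to reduce the statement to the single fact that an \emph{integrally closed} finitely stable domain is Pr\"ufer, and then to establish that fact via the determinant trick. Since $\overline{D}$ is an overring of $D$, the fact recalled at the start of this section --- that overrings of finitely stable domains are finitely stable (\cite[Lemma 2.4]{olb}) --- shows that $\overline{D}$ is itself finitely stable. As $\overline{D}$ is also integrally closed, and as a domain in which every nonzero finitely generated ideal is invertible is Pr\"ufer, it will suffice to prove that each nonzero finitely generated ideal of $\overline{D}$ is invertible.

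The key step is to control the endomorphism ring of a finitely generated ideal. First I would observe that, for any domain $E$ with quotient field $K$ and any nonzero finitely generated ideal $I = (a_1, \ldots, a_n)$ of $E$, the ring $(I \colon I)$ is integral over $E$. Indeed, if $x \in (I \colon I)$ then $x a_i = \sum_{j=1}^n c_{ij} a_j$ for suitable $c_{ij} \in E$, so the nonzero column vector with entries $a_1, \ldots, a_n$ is annihilated by the matrix $(x \delta_{ij} - c_{ij})_{i,j}$ over the field $K$; hence $\det (x \delta_{ij} - c_{ij}) = 0$, which is a monic polynomial relation of degree $n$ for $x$ with coefficients in $E$. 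Thus $x$ is integral over $E$, and so $(I \colon I) \subseteq \overline{E}$.

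Applying this with $E = \overline{D}$, and using that $\overline{D}$ is integrally closed, gives $(I \colon I) = \overline{D}$ for every nonzero finitely generated ideal $I$ of $\overline{D}$: the inclusion $(I \colon I) \subseteq \overline{\overline{D}} = \overline{D}$ comes from the integrality statement, while $\overline{D} \subseteq (I \colon I)$ holds automatically since $I$ is an $\overline{D}$-module. Because $\overline{D}$ is finitely stable, such an $I$ is invertible in its endomorphism ring $(I \colon I)$, which is now just $\overline{D}$; that is, $I$ is invertible in $\overline{D}$. Since this holds for every nonzero finitely generated ideal, $\overline{D}$ is a Pr\"ufer domain.

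The main obstacle is the integrality statement $(I \colon I) \subseteq \overline{E}$, which is the engine that converts finite stability into the Pr\"ufer conclusion once integral closedness is invoked; everything else is formal. The points to handle carefully there are that the ideal may be taken with a fixed finite generating set (harmless after clearing denominators to reduce to an integral ideal) and that the annihilated vector is genuinely nonzero, so that the vanishing of the determinant yields an honest monic integral dependence relation for $x$ over $E$.
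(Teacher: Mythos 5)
Your argument is correct, but there is nothing in the paper to compare it against: the paper states this proposition only as a recalled result, citing Rush (\cite[Proposition 2.1]{rush}), and gives no proof of its own. Your reconstruction is a sound, essentially self-contained one. The two pillars are (1) finite stability passes to the overring $\overline{D}$, and (2) for a nonzero finitely generated ideal $I$ of a domain $E$ the determinant trick forces $(I\colon I)$ to be integral over $E$, so that over the integrally closed ring $\overline{D}$ one gets $(I\colon I)=\overline{D}$ and invertibility in $(I\colon I)$ becomes invertibility in $\overline{D}$, whence $\overline{D}$ is Pr\"ufer. Both steps check out: the determinant argument is the standard Cayley--Hamilton computation and you correctly flag that the annihilated vector is nonzero; and although you import step (1) from Olberding via the paper's Section 3, that step also admits a direct verification (for $J=(t_1,\dots,t_n)T$ put $I=(t_1,\dots,t_n)D$; from $I\cdot((I\colon I)\colon I)=(I\colon I)$ one gets $IT\cdot((IT\colon IT)\colon IT)\supseteq (I\colon I)T$, and multiplying by the ring $(IT\colon IT)$ yields equality with $(IT\colon IT)$), so there is no hidden circularity with Rush's result. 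The only cosmetic remark is that the reduction to integral ideals is unnecessary: the determinant trick needs only that $I$ is a finitely generated $E$-submodule of $K$, since $xa_i\in I$ already gives $xa_i=\sum_j c_{ij}a_j$ with $c_{ij}\in E$.
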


Since quasi-stable domains are finitely stable we have the following
corollary:

\begin{corollary}
The integral closure of a quasi-stable domain is a Pr\"ufer domain
and so it is quasi-stable.
\end{corollary}

\begin{proposition}\label{quasi-flatnes in overring}
Let $D$ be an integral domain and $T$  be an overring of $D$. If $I$
is a quasi-stable ideal of $D$, then $IT$ is a quasi-stable ideal of
$T$.
\end{proposition}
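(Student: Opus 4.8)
The plan is to reduce everything to the flatness-transfer result already in hand, Proposition \ref{flatness-overring}, but applied with base ring $(I \colon I)$ rather than $D$. Set $E := (I \colon I)$ and $F := (IT \colon IT)$. By hypothesis $I$ is quasi-stable, which says exactly that $I$ is a flat ideal of $E$. Note that $I$ genuinely is a fractional ideal of $E$: it is a nonzero $E$-submodule of $K$, and since $dI \subseteq D \subseteq E$ for some nonzero $d \in D$, it is fractional over $E$ as well.

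The first step is to observe that $F$ is an overring of $E$. Indeed, if $x \in E$ then $xI \subseteq I$, so $x(IT) = (xI)T \subseteq IT$, giving $x \in F$; thus $E \subseteq F$. (Similarly $T \subseteq F$, since every $t \in T$ satisfies $t(IT) \subseteq IT$.) Now I would apply Proposition \ref{flatness-overring} with the domain $E$, the flat ideal $I$ of $E$, and the overring $F$. This immediately yields that $IF$ is a flat ideal of $F$.

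The remaining step, and the only one needing a short computation, is to identify $IF$ with $IT$, so that the conclusion reads ``$IT$ is flat over its own endomorphism ring $F = (IT \colon IT)$''. On one side, $T \subseteq F$ gives $IT \subseteq IF$. On the other side, the defining property $F \cdot IT \subseteq IT$ of $F = (IT \colon IT)$, combined with $I \subseteq IT$, gives $IF \subseteq (IT)F = IT$. Hence $IF = IT$, and therefore $IT = IF$ is a flat ideal of $F = (IT \colon IT)$; that is, $IT$ is quasi-stable. The main subtlety is simply recognizing that the right base ring for invoking Proposition \ref{flatness-overring} is $(I:I)$, not $D$, after which the verification that $F \supseteq E$ and the identity $IF = IT$ are both formal.
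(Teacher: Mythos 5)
Your proof is correct and follows essentially the same route as the paper: both rest on Proposition \ref{flatness-overring} applied with base ring $(I\colon I)$. The only difference is organizational --- the paper passes through the intermediate overring $(I\colon I)T$ and invokes Proposition \ref{flatness-overring} twice, whereas you go directly to $(IT\colon IT)$ in a single application and then verify the identity $I(IT\colon IT)=IT$; the two verifications are interchangeable.
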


\begin{proof}
Since $I$ is flat in $(I \colon I)$, then $IT = I(I \colon I)T$ is
flat in $(I \colon I)T$,   by Proposition \ref{flatness-overring}.
Now, $(I \colon I)T \subseteq (IT \colon IT)$,   so  applying  again
  Proposition \ref{flatness-overring},   we obtain that   $IT$
is a flat ideal of $(IT \colon IT)$.
\end{proof}

As it is stated in the next result, a  case in which the
quasi-stability transfers to overrings is when we have a ring
extension $D \hookrightarrow T$ such that map
$$\Phi_D^T: \F(D) \rightarrow \F(T), \quad I \mapsto IT$$
\nt is surjective that is, when each ideal of $T$ is an extension of
an ideal of $D$ (we remark that this includes also the case when an
integral ideal of $T$ is an extension of a fractional ideal of $D$).

\begin{corollary}\label{overrings}
Let $D$ be an integral domain and let $T$ be an overring of $D$ such
that $\Phi_D^T$ is surjective. Then, if $D$ is quasi-stable, $T$ is
quasi-stable.
\end{corollary}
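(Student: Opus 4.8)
The plan is to leverage Proposition~\ref{quasi-flatnes in overring} together with the surjectivity hypothesis on $\Phi_D^T$. Let $J$ be an arbitrary nonzero ideal of $T$; I must show $J$ is flat in $(J \colon J)$. The surjectivity of $\Phi_D^T$ means there exists a nonzero ideal $I \in \F(D)$ with $J = IT$. Since $D$ is quasi-stable, $I$ is a quasi-stable ideal of $D$, that is, $I$ is flat in $(I \colon I)$.

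Now I would simply invoke Proposition~\ref{quasi-flatnes in overring} with this overring $T$: since $I$ is quasi-stable in $D$, the extended ideal $IT$ is quasi-stable in $T$, i.e.\ $IT = J$ is flat in $(IT \colon IT) = (J \colon J)$. Because $J$ was an arbitrary nonzero ideal of $T$ and every such $J$ arises as $IT$ by the surjectivity hypothesis, every nonzero ideal of $T$ is quasi-stable. Hence $T$ is quasi-stable, which is exactly the conclusion.

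The point that requires care is the logical role of the surjectivity of $\Phi_D^T$: it is precisely what guarantees that \emph{every} ideal of $T$ — not merely the extended ones — is captured as $IT$ for some $I \in \F(D)$. Without this, Proposition~\ref{quasi-flatnes in overring} would only tell us about the extended ideals, which need not exhaust $\F(T)$. I would also note that the remark in the statement of surjectivity (that integral ideals of $T$ may be extensions of fractional ideals of $D$) is the reason we work with $\F(D)$ and $\F(T)$ throughout rather than restricting to integral ideals. Thus the entire argument reduces to a one-line application of the previous proposition once the surjectivity is used to produce the preimage $I$; there is no genuine obstacle here, since the real work was already done in Proposition~\ref{quasi-flatnes in overring}.
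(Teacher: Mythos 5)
Your argument is correct and is exactly the paper's intended proof: the authors simply say the corollary is ``an immediate consequence of Proposition~\ref{quasi-flatnes in overring},'' and your write-up spells out the same one-line deduction, with the surjectivity of $\Phi_D^T$ used precisely as you describe to realize every ideal of $T$ as an extended ideal $IT$. No gaps; you have just made the paper's implicit reasoning explicit.
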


\begin{proof}
It is an immediate consequence of Proposition \ref{quasi-flatnes in
overring}.
\end{proof}

Interesting classes of overrings of a   domain $D$   which
satisfy the condition of Corollary \ref{overrings} are studied in
\cite{sega} and we list them  as follows:

\begin{itemize}
\item $T$ is an overring of $D$ such that
$(D:T) \neq 0$ (a particular case is when $T = (I \colon I)$);

\item $T$ is a flat overring of $D$ (i.e., $T$ is flat as a
$D$-module);

\item $T$ is a Noetherian overring of $D$;

\item $T$ is \textit{well-centered} on $D$ (i.e., for all $t \in T$ there exists $u \in U(D)$ such that $ut \in
D$);

\item $T$ is any overring of a domain $D$ such that   $\overline{D}$   is  Pr\"ufer
 and it is a
(fractional) ideal of $D$.
\end{itemize}

Recalling that if $D$ is quasi-stable then $D$ is finitely stable
and so its integral closure is Pr\"ufer, from the last point of the
list above we get the following:

\begin{corollary}\label{nonzero conductor}
Let $D$ be an integral domain such that $(D:\overline{D}) \neq (0)$.
If $D$ is quasi-stable, then every overring of $D$ is quasi-stable
\end{corollary}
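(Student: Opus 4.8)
The plan is to derive this as a direct consequence of the general transfer result, Corollary~\ref{overrings}, together with the structural property of finitely stable domains. The key observation is that the hypothesis $(D:\overline{D}) \neq (0)$ places us exactly in the situation described by the last item of the list of classes satisfying the surjectivity condition: since $D$ is quasi-stable it is finitely stable (as noted after Proposition~\ref{prop:fs-fqs}), and hence by Rush's result (\cite[Proposition 2.1]{rush}) the integral closure $\overline{D}$ is a Pr\"ufer domain. The nonzero conductor $(D:\overline{D}) \neq (0)$ says precisely that $\overline{D}$ is a (fractional) ideal of $D$. So the ambient domain $D$ meets the stated hypotheses of that last class: $\overline{D}$ is Pr\"ufer and is a fractional ideal of $D$.

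First I would fix an arbitrary overring $T$ of $D$ and invoke the cited result from \cite{sega} for this last class, which guarantees that $\Phi_D^T$ is surjective whenever $\overline{D}$ is Pr\"ufer and is a fractional ideal of $D$. This is the crux of the argument: once surjectivity of $\Phi_D^T$ is in hand, the conclusion is immediate. I would then apply Corollary~\ref{overrings} directly to conclude that $T$ is quasi-stable. Since $T$ was an arbitrary overring of $D$, this shows every overring of $D$ is quasi-stable, which is exactly the assertion.

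The main point requiring care is verifying that the hypotheses of the last item of the list genuinely apply, i.e.\ that the two conditions ``$\overline{D}$ is Pr\"ufer'' and ``$\overline{D}$ is a fractional ideal of $D$'' are both met. The first follows from finite stability via Rush's proposition, and the second is a reformulation of the nonzero conductor hypothesis $(D:\overline{D}) \neq (0)$: an overring $\overline{D}$ is a fractional ideal of $D$ exactly when there is a nonzero element $d \in D$ with $d\overline{D} \subseteq D$, that is, exactly when $(D:\overline{D})$ is nonzero. Granting these, the surjectivity of $\Phi_D^T$ for \emph{every} overring $T$ follows from the result of \cite{sega} quoted in the list, and the corollary then yields the claim with no further computation. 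The only potential obstacle is purely bookkeeping, namely confirming that the cited class in \cite{sega} is indeed stated for all overrings $T$ simultaneously rather than for a single fixed one; since the condition on $\overline{D}$ does not depend on $T$, this is automatic.

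\begin{proof}
Since $D$ is quasi-stable, it is finitely stable (by the remark following Proposition~\ref{prop:fs-fqs}), and so by \cite[Proposition 2.1]{rush} its integral closure $\overline{D}$ is a Pr\"ufer domain. The hypothesis $(D:\overline{D}) \neq (0)$ means precisely that $\overline{D}$ is a (fractional) ideal of $D$. Thus $D$ satisfies the conditions of the last class in the list above, and consequently $\Phi_D^T$ is surjective for every overring $T$ of $D$. By Corollary~\ref{overrings}, every such $T$ is quasi-stable.
\end{proof}
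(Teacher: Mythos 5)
Your proof is correct and follows exactly the route the paper intends: quasi-stable $\Rightarrow$ finitely stable $\Rightarrow$ $\overline{D}$ Pr\"ufer by Rush, the nonzero conductor makes $\overline{D}$ a fractional ideal of $D$, so every overring $T$ falls into the last class of the list (giving surjectivity of $\Phi_D^T$ via \cite{sega}), and Corollary~\ref{overrings} finishes. The paper gives no separate proof beyond the sentence preceding the corollary, and your write-up matches that argument step for step.
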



A domain $D$ is called \emph{conducive} if $(D:T) \neq (0)$  for all
overrings of $D$.

\begin{corollary}\label{conducive}
An overring of a conducive quasi-stable domain is quasi-stable.
\end{corollary}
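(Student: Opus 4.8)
The plan is to reduce the statement to Corollary \ref{nonzero conductor}, which already establishes that every overring of a quasi-stable domain $D$ is quasi-stable as soon as the conductor $(D:\overline{D})$ is nonzero. The key observation is that the conducive hypothesis delivers this conductor condition for free: by definition a conducive domain $D$ satisfies $(D:T) \neq (0)$ for \emph{every} overring $T$ of $D$, and the integral closure $\overline{D}$ is in particular an overring of $D$.

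Concretely, I would argue as follows. First, apply the definition of conducive with $T = \overline{D}$ to obtain $(D:\overline{D}) \neq (0)$. Since $D$ is also quasi-stable by hypothesis, both hypotheses of Corollary \ref{nonzero conductor} are met, and that corollary immediately yields that every overring of $D$ is quasi-stable. As any given overring of $D$ is covered by this conclusion, the statement follows.

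There is essentially no real obstacle here: once one notices that being conducive is a strictly stronger condition than the single nonvanishing conductor $(D:\overline{D}) \neq (0)$ required in Corollary \ref{nonzero conductor}, the result is an immediate specialization. An equivalent route, which avoids passing through the integral closure, is to invoke Corollary \ref{overrings} together with the first item in the list of overrings with surjective $\Phi_D^T$: for a conducive $D$ every overring $T$ has $(D:T) \neq (0)$, hence $\Phi_D^T$ is surjective, and Corollary \ref{overrings} then gives that $T$ is quasi-stable directly. Either way the proof is a one-line consequence of the machinery already developed.
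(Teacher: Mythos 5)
Your proof is correct and matches the paper's (implicit) argument: the corollary is an immediate specialization of Corollary~\ref{nonzero conductor}, since a conducive domain in particular has $(D:\overline{D})\neq(0)$; your alternative route through Corollary~\ref{overrings} and the condition $(D:T)\neq(0)$ for the given overring $T$ is equally valid and equally in the spirit of the paper.
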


Note that there exist quasi-stable domains which are not conducive
(for example, not all Pr\"ufer domains are conducive).

\bigskip

The study of stability and finite stability can be reduced to the
local case, since a domain is stable if and only if it is locally
stable and it has the finite character (\cite[Theorem 3.3]{olb}),
and it is finitely stable if and only if it is locally finitely
stable. We approach this question  in the case of quasi-stable
domains.

Any localization of a domain $D$ is a flat overring of $D$. Thus, we
can easily get the following result as a corollary of
Corollary~\ref{overrings}.

\begin{corollary}
A quasi-stable domain $D$ is locally quasi-stable (i.e., $D_P$ is
quasi-stable for each $P \in \spec(D)$).
\end{corollary}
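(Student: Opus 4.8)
The plan is to derive this as a direct application of Corollary~\ref{overrings}, whose hypothesis requires that the extension map $\Phi_D^T\colon \F(D) \to \F(T)$ be surjective. The strategy is therefore to exhibit each localization $D_P$ as an overring of $D$ for which this surjectivity holds, and then invoke Corollary~\ref{overrings} verbatim.

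First I would observe that for each $P \in \spec(D)$, the localization $D_P$ is an overring of $D$ sitting inside the quotient field $K$, and that it is a flat $D$-module (localizations are flat). Next I would recall that flat overrings appear in the list of extensions satisfying the condition of Corollary~\ref{overrings} (the references in \cite{sega} establish that when $T$ is a flat overring of $D$, every ideal of $T$ is extended from $D$, i.e.\ $\Phi_D^T$ is surjective). Applying this with $T = D_P$ gives that $\Phi_D^{D_P}$ is surjective.

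With the hypothesis of Corollary~\ref{overrings} verified, the conclusion is immediate: since $D$ is quasi-stable and $\Phi_D^{D_P}$ is surjective, Corollary~\ref{overrings} yields that $D_P$ is quasi-stable, and since $P$ was arbitrary, $D$ is locally quasi-stable.

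There is no genuine obstacle here, as the real work is already packaged in Corollary~\ref{overrings} and in the cited fact that flat overrings have surjective extension map; the only point requiring care is the bookkeeping that a localization is simultaneously an overring inside $K$ and a flat $D$-module, so that it legitimately falls under the flat-overring case of the list preceding the corollary.
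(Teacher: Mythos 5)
Your proposal is correct and follows exactly the paper's own route: the paper also notes that any localization $D_P$ is a flat overring of $D$, hence falls under the flat-overring case in the list of extensions with $\Phi_D^T$ surjective, and then applies Corollary~\ref{overrings}. Nothing is missing.
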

For the inverse implication, that is whether a locally quasi-stable
domain is quasi-stable, we give partial results.

We recall that a domain $D$ is $h$-local if each nonzero ideal $I$
of $D$ is contained in at most finitely many maximal ideals of $D$
and each nonzero prime ideal of $D$ is contained in a unique maximal
ideal of $D$. Examples of $h$-local domains are   one-dimensional
Noetherian domains or domains in which each nonzero ideal is
divisorial (\cite{heinzer,olb2}.

We show that if a domain $D$ is locally quasi-stable and $h$-local,
then $D$ is quasi-stable. Note that this does not allow us to reduce
the problem of   flat-stability
 to the local case, because quasi-stable domains are not necessarily $h$-local  (a Pr\"ufer domain is quasi-stable but it may not be $h$-local).

\begin{lemma}
Let $D$ be an integral domain and $I$ a nonzero ideal of $D$. Assume
that $(I:I)D_M = (ID_M:ID_M)$, for all $M \in \max(D)$.  If $ID_M$
is quasi-stable (as an ideal of $D_M$) for all $M \in \max(D)$, then
$I$ is  quasi-stable.
\end{lemma}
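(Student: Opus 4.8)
The plan is to reduce flatness of $I$ over the endomorphism ring $(I\colon I)$ to a local statement and then invoke the hypotheses at each maximal ideal. By Proposition~\ref{char:flatness}, to prove that $I$ is flat as an ideal of $S := (I\colon I)$ it suffices to show that $(\mathcal A \cap \mathcal B)I = \mathcal A I \cap \mathcal B I$ for all $\mathcal A, \mathcal B \in \F(S)$. Since the inclusion $(\mathcal A \cap \mathcal B)I \subseteq \mathcal A I \cap \mathcal B I$ always holds, the content is the reverse inclusion, and equality of $S$-submodules of $K$ can be checked locally at the maximal ideals of $S$. The natural strategy is therefore to pass to localizations; but the hypothesis is phrased in terms of $\max(D)$ and the localizations $D_M$, so the first genuine step is to relate the localizations of $S$ at its maximal ideals to the rings $D_M$ and to the given data $(ID_M \colon ID_M)$.

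First I would exploit the hypothesis $(I\colon I)D_M = (ID_M \colon ID_M)$, which says that localizing the endomorphism ring $S$ at $M$ recovers the endomorphism ring of the localized ideal $ID_M$ inside $D_M$. The point of this equality is that it makes the two possible ``endomorphism rings'' (first localize then take $(I\colon I)$, or first take $(I\colon I)$ then localize) agree. I would then check that flatness of $I$ over $S$ can be tested after applying $-\otimes_D D_M$: because $D_M$ is a flat (indeed localization) overring of $D$, tensoring commutes with the finite intersections appearing in the flatness criterion, and the identity $(ID_M)(A\cap B) = (ID_M)A \cap (ID_M)B$ that we want to verify is exactly the flatness criterion (Proposition~\ref{char:flatness}) for $ID_M$ as an ideal of its endomorphism ring. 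Thus the hypothesis that $ID_M$ is quasi-stable, i.e.\ flat over $(ID_M \colon ID_M) = SD_M$, supplies precisely the local version of the equality we need.

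The main obstacle I anticipate is the bookkeeping of modules and base rings: one must be careful that the localization maps used are the right ones, and that $SD_M$ coincides with the localization of $S$ at the maximal ideals lying over $M$, so that a check ``for all $M \in \max(D)$'' really amounts to a check at all maximal ideals of $S$. Here the given identity $(I\colon I)D_M = (ID_M \colon ID_M)$ is doing the crucial work, since $(I\colon I) = S$ may have finer maximal ideal structure than $D$, and without this identity the local endomorphism rings need not match. The plan is to show that every maximal ideal of $S$ contracts to some $M \in \max(D)$ and that the localization of $S$ there factors through $SD_M$, so that flatness of $I$ over $S$ is detected by the family $\{ID_M\}_{M \in \max(D)}$.

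Once the localizations are correctly identified, the conclusion follows formally: for arbitrary $\mathcal A, \mathcal B \in \F(S)$ the modules $(\mathcal A \cap \mathcal B)I$ and $\mathcal A I \cap \mathcal B I$ become, after localizing at a maximal ideal of $S$ lying over $M$, the corresponding modules over $SD_M = (ID_M \colon ID_M)$, where they coincide by the assumed flatness of $ID_M$. Since these $S$-submodules of $K$ agree after every localization, they agree, and Proposition~\ref{char:flatness} (applied over the ring $S$) yields that $I$ is flat over $(I\colon I)$, i.e.\ $I$ is quasi-stable.
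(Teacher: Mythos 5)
Your proposal is correct and follows essentially the same route as the paper: verify the intersection criterion of Proposition~\ref{char:flatness} for $I$ over $S=(I\colon I)$ by localizing, and use the hypothesis $(I\colon I)D_M=(ID_M\colon ID_M)$ to identify the localized endomorphism ring with the one where flatness of $ID_M$ is assumed. The only difference is that the paper checks the equality $I(A\cap B)=IA\cap IB$ locally at the maximal ideals of $D$ (viewing everything as $D$-submodules of $K$), which dissolves the bookkeeping about $\max(S)$ versus $\max(D)$ that you flag as the main obstacle.
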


\begin{proof} We need to show that $I(A \cap B) = IA \cap IB$, for
each $A,B \in \F((I \colon I))$. This is equivalent to showing that
$I_M(A_M \cap B_M) = I_MA_M \cap I_MB_M$, for each $M \in \max(D)$.
But $A_M,B_M \in \FF((I \colon I)_M)$ and since, by hypothesis
$(I:I)D_M = (ID_M:ID_M)$, $A_M,B_M$ are $(ID_M:ID_M)$-modules. So
$I_M(A_M \cap B_M) = I_MA_M \cap I_MB_M$ because $ID_M$ is flat over
$(ID_M:ID_M)$.
\end{proof}

Note that the equality $(I:I)D_M=(ID_M:ID_M)$ is always satisfied
when $I$ is finitely generated, by the flatness of $D_M$ over $D$.
But this case is not interesting since quasi-stable  finitely
generated ideals are  stable (and have already been widely studied
especially in the finitely generated case, Cf. \cite{goeters,rush}).

In general, as the following example shows, it may happen that
$(I:I)D_M \neq (ID_M:ID_M)$ even in quasi-stable domains.

\begin{example}
Consider the domain Int$(\SZ) := \{f(X) \in \SQ[X] \mid f(\SZ)
\subseteq \SZ \}$. It is well-known that Int$(\SZ)$ is completely
integrally closed,  being $\SZ$ completely integrally closed
(\cite[Proposition VI.2.1]{cc}). Thus, $(I \colon I) = \intz$, for
each nonzero ideal $I$ of $\intz$. It is also well-known that
$\intz$ is a two-dimensional Pr\"ufer domain (\cite{cc}), whence
there exists a maximal ideal $M$ such that $\intz_M$ is a
two-dimensional valuation domain. It follows that $\intz_M$ is not
completely integrally closed and so there exists a nonzero ideal $I$
of $\intz$ such that $(I_M \colon I_M) \neq \intz_M$. But, $\intz  =
(I \colon I)$, so we have that $(I_M \colon I_M) \neq (I \colon
I)_M$.
\end{example}

Olberding (\cite[Lemma 3.8]{olb2}) has shown that if $D$ is
$h$-local, then the equality $(I \colon I)D_M = (ID_M \colon ID_M)$
holds, for each $I \in \F(D)$ and $M \in \max(D)$. Then, for
$h$-local domains,   the quasi-stable property can be locally
verified.

\begin{corollary}
Let $D$ be an $h$-local domain.   Then $D$ is quasi-stable if and
only if $D_M$ is quasi-stable for each $M \in \max(D)$.
\end{corollary}

\ec

\bigskip


\end{document}